\documentclass[11pt]{amsart}
\usepackage{enumerate, amssymb}

\usepackage{amsfonts}
\usepackage{enumerate}
\usepackage{amsmath}
\usepackage{amssymb}
\usepackage{graphicx}

\usepackage{enumerate}
\usepackage{amsmath}
\usepackage{amssymb}

\usepackage[latin1]{inputenc}

\usepackage[colorlinks, bookmarks=true]{hyperref}
\usepackage{color,graphicx,shortvrb}
\usepackage[active]{srcltx} 


\usepackage{amsmath}

\newtheorem{theorem}{Theorem}[section]

\newtheorem{lemma}[theorem]{Lemma}

\theoremstyle{definition}
\newtheorem{definition}[theorem]{Definition}

\theoremstyle{remark}
\newtheorem{remark}[theorem]{Remark}

\numberwithin{equation}{section}
\def\sqr#1#2{{\,\vcenter{\vbox{\hrule height.#2pt\hbox{\vrule width.#2pt
height#1pt \kern#1pt\vrule width.#2pt}\hrule height.#2pt}}\,}}

\def\Lip{\mathrm{Lip}}
\def\Iso{\mathrm{Iso}}
\def\Aut{\mathrm{Aut}}
\def\Int{\mathrm{Int}}
\def\diam{\mathrm{diam}}

\begin{document}

\title[$2$-local isometries on vector-valued Lipschitz spaces]{$2$-local standard isometries on vector-valued Lipschitz function spaces}

\author[A. Jim{\'e}nez-Vargas]{Antonio Jim{\'e}nez-Vargas}
\address[A. Jim{\'e}nez-Vargas]{Departamento de Matem{\'a}ticas, Universidad de Almer{\'i}a, 04120, Almer{\i}a, Spain}
\email{ajimenez@ual.es}

\author[L. Li]{Lei Li}
\address[L. Li]{School of Mathematical Sciences and LPMC, Nankai University, Tianjin 300071, China}
\email{leilee@nankai.edu.cn}

\author[A.M. Peralta]{Antonio M. Peralta}
\address[A.M. Peralta]{Departamento de An\'{a}lisis Matem\'{a}tico, Facultad de Ciencias, Universidad de Granada, 18071 Granada, Spain}
\email{aperalta@ugr.es}

\author[L. Wang]{Liguang Wang}
\address[L. Wang]{School of Mathematical Sciences, Qufu Normal University, Qufu 273165, China}
\email{wangliguang0510@163.com}

\author[Y.-S. Wang]{Ya-Shu Wang}
\address[Y.-S. Wang]{Department of Applied Mathematics, National Chung Hsing University, Taichung 402, Taiwan}
\email{yashu@nchu.edu.tw}

\date{\today}
\subjclass[2000]{46B20, 46B04, 46E15}

\keywords{$2$-local isometry; $2$-iso-reflexive; Lipschitz function; isometry.}

\begin{abstract}
Under the right conditions on a compact metric space $X$ and on a Banach space $E$, we give a description of the $2$-local (standard) isometries on the Banach space $\Lip(X,E)$ of vector-valued Lipschitz functions from $X$ to $E$ in terms of a generalized composition operator, and we study when every $2$-local (standard) isometry on $\Lip(X,E)$ is both linear and surjective.
\end{abstract}

\maketitle

\section{Introduction}

Let $E$ and $F$ be Banach spaces. Let $\mathcal{S}$ be a subset of the space $L(E,F)$ of all continuous linear maps from $E$ into $F$. We shall follow the notation in \cite{NiPe2014,NiPe2015,CaPe2015,CaPe2017} and \cite{Semrl97}. Accordingly to those references, a (non-necessarily linear nor continuous) mapping $\Delta\colon E\to F$ is a \emph{2-local $\mathcal{S}$-map} if for any $x,y\in E$, there exists $T_{x,y}\in\mathcal{S}$, depending on $x$ and $y$, such that $$\hbox{$\Delta(x)=T_{x,y}(x)$ and $\Delta(y)=T_{x,y}(y)$.}$$

The most studied class of $2$-local $\mathcal{S}$-maps is that related to the set $\mathcal{S}=\Iso(E,F)$ of all surjective linear isometries from $E$ onto $F$. We shall write $\Iso(E)$ instead of $\Iso(E,E)$. A Banach space $E$ is said to be \emph{$2$-iso-reflexive} if every $2$-local $\Iso(E)$-map (also called \emph{$2$-local isometry}) on $E$ is both linear and surjective.\smallskip

Similarly, when $A$ is a Banach algebra and $\Aut(A)$ denotes the set of all automorphisms on $A$, we shall say that $A$ is \textit{$2$-auto-reflexive} if every $2$-local $\Aut(A)$-map (also named \emph{$2$-local automorphism}) on $A$ is both linear and surjective.\smallskip

A problem addressed in the literature by different authors is to study when a Banach space is $2$-iso-reflexive or a Banach algebra is $2$-auto-reflexive. Another interesting problem is to determine when a $2$-local isometry or a $2$-local automorphism is linear. Let us present an illustrative list of examples.\smallskip

\begin{enumerate}[$(\checkmark)$]
\item For $1\leq p<\infty$ and $p\neq 2$, the space $\ell^p$ is $2$-iso-reflexive (Al-Halees and Fleming \cite{AF09});
\item For a Hilbert space $H$, the algebra $B(H)$ is $2$-auto-reflexive (\v{S}emrl \cite{Semrl97} for the case in which $H$ is infinite dimensional and separable, and Ayupov and Kudaybergenov \cite{AyuKuday2012} for the general case);
\item For an infinite dimensional separable complex Hilbert space $H$, the C$^*$-algebra $B(H)$ is $2$-iso-reflexive, and every C$^*$-subalgebra $A$ of $B(H)$ which is an extension of the algebra
of all compact operators by a separable commutative C$^*$-algebra is $2$-iso-reflexive (Moln\'{a}r \cite{Mol02});
\item Every $2$-local $^*$-homomorphism from a von Neumann algebra (respectively, from a compact C$^*$-algebra) into a C$^*$-algebra is a linear $^*$-homomorphism (Burgos, Fern{\' a}ndez-Polo, Garc{\'e}s, and Peralta \cite{BurFerGarPe2015}).
\item For a first countable compact Hausdorff space $X$, the C$^*$-algebra $C(X)$ of all complex-valued continuous functions on $X$ is $2$-auto-reflexive (Moln\'{a}r \cite{Mol02b});
\item Suppose $L$ is a first countable and $\sigma$-compact locally compact Hausdorff space. Then the C$^*$-algebra $C_0(L)$ of all complex-valued continuous functions on $L$ which vanish at infinity, is $2$-iso-reflexive (Gy\H{o}ry \cite{Gyo01}). Gy\H{o}ry also showed the existence of an uncountable discrete space $L$ for which there exists a non-surjective $2$-local automorphism on $C_0(L)$;
\item If $A$ is a uniform algebra on a compact Hausdorff space $X$, that is, a closed subalgebra of $C(X)$ which separates the points of $X$ and contains the constant functions, then every $2$-local automorphism $T$ on $A$ is a linear isometry from $A$ onto $T(A)$.\smallskip

Given a compact subset $K\subset\mathbb{C}$, let $A(K)$ denote the algebra of all complex-valued continuous functions on $K$ which are holomorphic on the interior $\Int(K)$ of $K$. If $K$ is a connected compact subset of $\mathbb{C}$ such that $\Int(K)$ has finitely many components and $\overline{\Int(K)} = K$, then every $2$-local isometry ($2$-local automorphism) $T$ on $A(K)$ is a surjective linear isometry (respectively, automorphism). The same conclusion holds when $K$ is the closure of a strictly pseudoconvex domain in $\mathbb{C}^2$ with boundary of class $C^2$ (Hatori, Miura, Oka and Takagi \cite{HaMiOkTak07});\smallskip

\item If $K$ is a $\sigma$-compact metric space and $E$ is a smooth reflexive Banach space, then $C_0(K,E)$ is $2$-iso-reflexive if and only if $E$ is $2$-iso-reflexive (Al-Halees and Fleming \cite{AF09});
\item Every weak-$2$-local isometry between uniform algebras is linear  (Li, Peralta, Wang and Wang \cite{LiPeWangWang17}).
\end{enumerate}

2-local derivations on C$^*$-algebras have been studied in \cite{Semrl97,AyuKuday2014,AyuKudPe2014,NiPe2014, NiPe2015,CaPe2015,CaPe2017,JorPe} and \cite{JorPe17}.\smallskip

Let $X$ be a metric space and let $E$ be a normed space. Throughout this paper the \textit{Lipschitz space} $\Lip(X,E)$ is the space of all bounded Lipschitz functions $f$ from $X$ to $E$, endowed with the norm
$$
\left\|f\right\|_L=\max\left\{L(f),\left\|f\right\|_\infty\right\},
$$
where $L(f)$ denotes the Lipschitz constant of $f$ and $\left\|f\right\|_\infty$ the uniform norm of $f$. In the case $E=\mathbb{C}$, we shall simply write $\Lip(X)$. It is usual to consider also the vector space $\Lip(X)$ equipped with the norm $\left\|f\right\|_s=L(f)+\left\|f\right\|_\infty$ for all $f\in\Lip(X)$.\smallskip

The aim of this note is to study $2$-local isometries between spaces of vector-valued Lipschitz functions $\Lip(X,E)$. This is probably the first case where a vector-valued study is considered. \smallskip

The complex-valued case was treated in \cite{JV2008,LiPeWangWang17}. Namely, if the set
$$
\mathcal{S}:=\Iso((\Lip(X),\|.\|),(\Lip(Y),\|.\|))
$$
is canonical (in the sense that every element of $\mathcal{S}$ can be expressed as a weighted composition operator $\tau C_\psi$, where the weight $\tau$ is a unimodular constant and the symbol $\psi$ is an isometry from $Y$ onto $X$) and $\|.\|$ stands indistinctly for the norms $\|\cdot\|_L$ and $\|\cdot\|_s$, it was stated in \cite{LiPeWangWang17} that every weak-$2$-local $\mathcal{S}$-map from $\Lip(X)$ to $\Lip(Y)$ is linear and, as a consequence, the space $(\Lip(K), \|.\|_s)$ is $2$-iso-reflexive for each compact metric space $K$ such that $\Iso(\Lip(K),\|.\|_{s})$ is canonical.\smallskip

In order to study $2$-local isometries, an appropriate starting point is a good knowledge on the surjective linear isometries of the concrete space to be studied. The key description to begin our study is provided by results due to the first author and Villegas-Vallecillos in \cite{JV2008} and Botelho, Fleming and Jamison in \cite{BFJ11}, and more concretely from the next theorem borrowed from the just quoted paper.

\begin{theorem}\label{thm BoFleJam}\cite[Theorem 10]{BFJ11}
Suppose that $X$, $Y$ are compact and pathwise connected metric spaces, and $E$, $F$ are smooth, reflexive Banach spaces. Then, for each surjective linear isometry $T\colon \Lip(X, E)\to \Lip(Y, F)$,  there exist a Lipschitz homeomorphism $\varphi\colon Y\to X$ with $L(\varphi)\leq \max\{1, \diam(X)\}$ and $L(\varphi^{-1})\leq \max\{1, \diam(Y)\},$ and a Lipschitz map $V\colon Y\to\Iso(E,F)$ such that
$$
T(f)(y)=V(y)(f(\varphi(y))),\quad\forall y\in Y,\ f\in\Lip(X, E).
$$
\end{theorem}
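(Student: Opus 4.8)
The plan is to run the Banach--Stone programme: recover the base map $\varphi$ and the fibrewise isometries $V(y)$ purely from the isometric structure of $T$, and only afterwards read off the quantitative Lipschitz estimates. Since $T$ is a surjective linear isometry, $T^{-1}\colon \Lip(Y,F)\to\Lip(X,E)$ is one as well, so the situation is symmetric in the pairs $(X,E)$ and $(Y,F)$; I would use this symmetry systematically, obtaining $\varphi$ and $\varphi^{-1}$ together with their estimates by the same argument applied to $T$ and to $T^{-1}$. The three hypotheses play distinct roles: reflexivity guarantees that norming functionals are attained and that the weak-$*$ and weak topologies on $E^*,F^*$ agree; smoothness of $E$ and $F$ forces each nonzero vector to have a \emph{unique} norming functional, which is what will pin down $V(y)$ unambiguously; and pathwise connectedness of $X$ and $Y$ is the rigidity hypothesis that prevents the representation from splitting and forces a single honest homeomorphism.

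First I would analyse the extreme points of the closed unit ball of the dual space, using the isometric embedding that realises the norm $\|\cdot\|_L=\max\{L(\cdot),\|\cdot\|_\infty\}$, namely $f\mapsto\bigl(f,\ (x,x')\mapsto \tfrac{f(x)-f(x')}{d(x,x')}\bigr)$ into an $\ell^\infty$-sum of two spaces of bounded continuous $E$-valued functions, on $X$ and on a suitable compactification of the off-diagonal pairs. Passing to duals turns this into an $\ell^1$-sum, whose extreme points are, by an Arens--Kelley type analysis, the two families of generalized evaluations $f\mapsto p(f(x))$ and $f\mapsto p\bigl(\tfrac{f(x)-f(x')}{d(x,x')}\bigr)$ with $p$ an extreme functional of the relevant dual ball; smoothness makes these $p$ the unique norming functionals. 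Since $T^*$ carries extreme points of the dual ball of $\Lip(Y,F)$ onto those of $\Lip(X,E)$, matching the two families should produce, fibre by fibre, a point correspondence $y\mapsto\varphi(y)$ and a linear isometry $V(y)\colon E\to F$ with $T(f)(y)=V(y)\bigl(f(\varphi(y))\bigr)$.

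The crux --- and the step I expect to be the main obstacle --- is the \emph{localization}: showing that $T(f)(y)$ depends on $f$ only through the single value $f(\varphi(y))$, and in particular that an evaluation-type extreme functional at $y$ is never sent by $T^*$ to a difference-quotient functional coupling two distinct points of $X$ (which would destroy the composition form). This is exactly where pathwise connectedness is indispensable: a connectedness/continuity argument is needed to show that $y\mapsto\varphi(y)$ is single-valued and continuous and that the operators $V(y)$ cannot jump, ruling out the locally constant gluings that occur for disconnected $X$. Smoothness of $F$ then upgrades this to uniqueness of $V(y)$, since $V(y)$ must send the unique norming functional of each vector to the unique norming functional of its image.

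Finally, with the representation $T(f)(y)=V(y)(f(\varphi(y)))$ established, the remaining assertions follow by testing $T$ on explicit functions. To bound $L(\varphi)$, fix $y_1\neq y_2$ in $Y$, put $x_i=\varphi(y_i)$, choose a unit vector $e_0\in E$, and apply $T$ to $f(x)=\frac{d(x,x_2)}{d(x_1,x_2)}\,e_0$; then $Tf(y_1)=V(y_1)e_0$ has norm $1$ while $Tf(y_2)=0$, so $1=\|Tf(y_1)-Tf(y_2)\|\leq L(Tf)\,d(y_1,y_2)\leq\|Tf\|_L\,d(y_1,y_2)=\|f\|_L\,d(y_1,y_2)\leq\frac{\max\{1,\diam(X)\}}{d(x_1,x_2)}\,d(y_1,y_2)$, which rearranges to $d(\varphi(y_1),\varphi(y_2))\leq\max\{1,\diam(X)\}\,d(y_1,y_2)$; the symmetric computation for $T^{-1}$ gives the bound on $L(\varphi^{-1})$, so $\varphi$ is a Lipschitz homeomorphism. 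Surjectivity of each $V(y)$, hence $V(y)\in\Iso(E,F)$, comes from applying the same representation to $T^{-1}$ and invoking reflexivity, while the Lipschitz continuity of $y\mapsto V(y)$ follows by comparing $T(e_0\,g)$ for scalar Lipschitz $g$ at nearby points, the continuity properties of the duality mapping supplied by smoothness closing the argument.
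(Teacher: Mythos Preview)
The paper does not prove this theorem; it is quoted verbatim from \cite[Theorem~10]{BFJ11} and closed with a bare $\Box$ immediately after the statement. It functions purely as a black box in the rest of the paper (notably in Theorems~\ref{thm:vL3} and~\ref{thm:main1}), so there is no ``paper's own proof'' to compare your proposal against.

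That said, your outline is an accurate summary of how the result is in fact established in \cite{BFJ11}: the de~Leeuw embedding realising $\|\cdot\|_L$ as a sup-norm, the Arens--Kelley description of the extreme points of the dual ball, the transport of extreme functionals by $T^{*}$, and the use of pathwise connectedness to prevent point-evaluation functionals from being sent to difference-quotient functionals are exactly the ingredients there. Your explicit test-function computation for $L(\varphi)$, using $f(x)=d(x,x_2)e_0/d(x_1,x_2)$, is correct and is essentially the same estimate the present paper reproduces at the end of the proof of Theorem~\ref{thm:vL3} with the functions $f_x$.

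What you have written, however, is a proof \emph{plan} rather than a proof: the two substantive steps---the precise classification of $\mathrm{ext}\,B_{\Lip(X,E)^{*}}$ and the localization argument that rules out the mixing of the two families of extreme functionals---are named but not executed. For the purposes of this paper that is harmless, since the theorem is only invoked, never reproved.
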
$\hfill\Box$

The previous theorem will be frequently applied without any explicit mention. Araujo and Dubarbie \cite{ArDubar2011} introduced an alternative subclass of surjective linear isometries admitting an expression similar to that of Theorem \ref{thm BoFleJam}, which is also appropriate for our purposes.\smallskip

Let $X$ and $Y$ be metric spaces and let $E$ and $F$ be normed spaces. A bijective map $\varphi\colon Y\to X$ \emph{preserves distances less than $2$} if $d(\varphi(x),\varphi(y))=d(x, y)$ whenever $d(x,y)<2$. The symbol $\Iso_{<2}(Y,X)$ will stand for the set of all bijective maps $\varphi\colon Y\to X$ such that both $\varphi$ and $\varphi^{-1}$ preserve distances less than 2. Notice that every element $\varphi\in\Iso_{<2}(Y,X)$ is a Lipschitz homeomorphism when $X$ and $Y$ are bounded.

\begin{definition}\cite[Definition 2.3]{ArDubar2011}
A map $T\colon\Lip(X,E)\to\Lip(Y,F)$ is a \emph{standard isometry} if there exist $\varphi\in \Iso_{<2}(Y,X)$ and a map $J\colon Y\to\Iso(E,F)$ constant on each $2$-component of $Y$ such that
$$
T(f)(y)=J(y)(f(\varphi(y)))
$$
for all $f\in\Lip(X,E)$ and $y\in Y$. Under these conditions, it can be easily seen that the map $J$ is actually a Lipschitz mapping (compare \cite[page 827, lines 28--31]{BFJ11}).
\end{definition}

Notice that every standard isometry is a surjective linear isometry. Theorem 3.1 in \cite{ArDubar2011} provides a condition under which both types of isometries coincide.\smallskip

Let ${\mathbf{S}}$ denote the set of all standard isometries from $\Lip(X,E)$ to $\Lip(Y,F)$. A mapping $\Delta\colon\Lip(X,E)\to\Lip(Y,F)$ which is a $2$-local $\mathbf{S}$-map will be called a \emph{$2$-local standard isometry}. We shall say that $\Lip(X,E)$ is \emph{$2$-standard-iso-reflexive} if every $2$-local standard isometry on $\Lip(X,E)$ is both linear and surjective.\smallskip

Under certain conditions on a compact metric space $X$ and on a normed space $E$, we shall give in this note a description of all $2$-local isometries and all $2$-local standard isometries on $\Lip(X,E)$ by means of a generalized composition operator, and we shall study when the space $\Lip(X,E)$ is $2$-iso-reflexive and $2$-standard-iso-reflexive, respectively.

\section{2-local isometries on vector-valued Lipschitz functions}

We shall make use of the following auxiliary functions. Let $(X,d)$ be a metric space and pick $x\in X$. It is immediate that $h_x\colon X\to [0,1]$, defined by
$$
h_x(z)=\max\left\{0, 1-d(z,x)\right\},\quad \forall z\in X,
$$
lies in $\Lip(X)$ and $\left\|h_x\right\|_L=1$. In fact, $h_x^{-1}(\{1\})=\{x\}$. If $X$ is bounded, the function $f_x\colon X\to\mathbb{R}_0^+$, given by
$$
f_x(z)=d(z,x),\quad \forall z\in X,
$$
is also in $\Lip(X)$ and $\left\|f_x\right\|_L\leq\max\left\{1,\diam(X)\right\}$.\smallskip

Given $f\in \Lip(X)$ and $e\in E$, the symbol $f\otimes e$ will denote the function defined by $f\otimes e(x)=f(x)e$ for all $x\in X$. Notice that $f\otimes e$ belongs to $\Lip(X,E)$ and $\left\|f\otimes e\right\|_L=\left\|f\right\|_L\left\|e\right\|$.\smallskip

For every normed space $E$, we write $B_E$ and $S_E$ for its closed unit ball and its unit sphere, respectively. Let us recall that a normed space $E$ is \emph{smooth} if for any point $e\in S_{E}$, there exists a unique functional $e^*\in S_{E^*}$ such that $e^*(e)=1$. It is known that a space $E$ is smooth if and only if its norm has a G\^{a}teaux differential at all points in $S_{E}$.\smallskip

Let $X$ and $Y$ be metric spaces, let $E$ and $F$ be normed spaces and let $\Delta\colon\Lip(X,E)\to\Lip(Y,F)$ be a mapping. For any $x\in X$, $u^*\in B_{E^*}$ and $f\in\Lip(X, E)$, we define the sets
$$
\mathcal{A}_{x,u^*,f}=\left\{(y, v^*)\in Y\times B_{F^*}\colon  v^*(\Delta (f)(y))=u^*(f(x))\right\}
$$
and
$$
\mathcal{A}_{x, u^*}=\bigcap_{f\in \Lip(X, E)}\mathcal{A}_{x,u^*, f}.
$$

\begin{lemma}\label{lem:vL1}
Let $X$ and $Y$ be compact metric spaces {\rm(}respectively, pathwise connected compact metric spaces{\rm)}, and let $E$ and $F$ be smooth reflexive Banach spaces. Then, for each $2$-local standard isometry {\rm(}respectively, for every $2$-local isometry{\rm)} $\Delta\colon\Lip(X,E)\to\Lip(Y,F)$, the set $\mathcal{A}_{x,u^*}$ is not empty for every $x\in X$ and $u^*\in S_{E^*}$. In such a case, $v^*\in S_{F^*}$ for every $(y,v^*)\in\mathcal{A}_{x, u^*}$.
\end{lemma}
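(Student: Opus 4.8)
The plan is to isolate a single \emph{probe} function whose associated set is a singleton, and then let $2$-locality transport that singleton across all of $\Lip(X,E)$. Fix $x\in X$ and $u^*\in S_{E^*}$. Since $E$ is reflexive, $u^*$ attains its norm, so there is $e_0\in S_E$ with $u^*(e_0)=1$; set $p:=h_x\otimes e_0\in\Lip(X,E)$, which satisfies $\|p\|_L=1$ and $u^*(p(x))=u^*(e_0)=1$. The whole argument rests on the fact that every isometry supplied by $2$-locality has the factored form $g\mapsto J(\cdot)\big(g(\varphi(\cdot))\big)$ with $\varphi$ a bijection of $Y$ onto $X$ and $J(y)\in\Iso(E,F)$: this is immediate for standard isometries, and in the pathwise connected case it follows for $2$-local isometries from Theorem \ref{thm BoFleJam}. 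Note also that $\Delta(p)=T(p)$ for a representing isometry $T$ of the pair $(p,p)$, so $\|\Delta(p)\|_L=\|p\|_L=1$.

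First I would pin down the point. Writing $\Delta(p)=T(p)$ with $T=(J,\varphi)$, one computes $\Delta(p)(y)=h_x(\varphi(y))\,J(y)(e_0)$, whence $\|\Delta(p)(y)\|=h_x(\varphi(y))$ because $J(y)$ is isometric and $\|e_0\|=1$. As $\varphi$ is a bijection and $h_x^{-1}(\{1\})=\{x\}$, the \emph{intrinsic} function $y\mapsto\|\Delta(p)(y)\|$ attains its maximum value $1$ at the unique point $y_0:=\varphi^{-1}(x)$, and is strictly smaller elsewhere. At $y_0$ the vector $\Delta(p)(y_0)=J(y_0)(e_0)$ lies in $S_F$, so the \emph{smoothness of $F$} yields a unique $v_0^*\in S_{F^*}$ with $v_0^*(\Delta(p)(y_0))=1$ (concretely $v_0^*=u^*\circ J(y_0)^{-1}$). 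For $y\neq y_0$ we have $\|\Delta(p)(y)\|<1$, so no $v^*\in B_{F^*}$ can satisfy $v^*(\Delta(p)(y))=1$. Hence $\mathcal{A}_{x,u^*,p}=\{(y_0,v_0^*)\}$ is a singleton.

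Next I would feed in an arbitrary $f\in\Lip(X,E)$ through $2$-locality on the pair $(f,p)$: there is a representing isometry $T_{f,p}=(J,\varphi)$ with $\Delta(f)=T_{f,p}(f)$ and $\Delta(p)=T_{f,p}(p)$. Putting $y_1:=\varphi^{-1}(x)$ and $v_1^*:=u^*\circ J(y_1)^{-1}\in S_{F^*}$, a direct computation gives $v_1^*(\Delta(p)(y_1))=u^*(p(x))=1$, so $(y_1,v_1^*)\in\mathcal{A}_{x,u^*,p}=\{(y_0,v_0^*)\}$, forcing $y_1=y_0$ and $v_1^*=v_0^*$. The same computation with $f$ in place of $p$ then yields $v_0^*(\Delta(f)(y_0))=u^*(f(x))$, i.e. $(y_0,v_0^*)\in\mathcal{A}_{x,u^*,f}$. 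Since $f$ was arbitrary, $(y_0,v_0^*)\in\mathcal{A}_{x,u^*}$, which proves nonemptiness; combined with the trivial inclusion $\mathcal{A}_{x,u^*}\subseteq\mathcal{A}_{x,u^*,p}=\{(y_0,v_0^*)\}$ this in fact shows $\mathcal{A}_{x,u^*}$ is exactly this singleton. The last assertion is then immediate, since any $(y,v^*)\in\mathcal{A}_{x,u^*}$ satisfies the probe condition $v^*(\Delta(p)(y))=1$, hence equals $(y_0,v_0^*)$, giving $v^*=v_0^*\in S_{F^*}$.

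The main obstacle I anticipate is conceptual rather than computational: $2$-locality only controls $\Delta$ on \emph{pairs}, whereas $\mathcal{A}_{x,u^*}$ is an intersection over \emph{all} $f$, so a naive finite-intersection or weak-$*$ compactness argument would require joint control of finitely many functions that the hypothesis does not supply. The device that resolves this is precisely the rigidity of the probe: making $\mathcal{A}_{x,u^*,p}$ a singleton lets a single pair $(f,p)$ do all the work at once. The two places where the hypotheses are genuinely used must be flagged carefully, namely the norm-attainment producing $e_0$ (reflexivity of $E$) and the uniqueness of the supporting functional at $\Delta(p)(y_0)$ (smoothness of $F$); and one should emphasize that it is the factored form of the representing isometries that makes the pointwise identities $v^*(\Delta(g)(y_0))=u^*(g(x))$ drop out.
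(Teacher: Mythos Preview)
Your argument is correct, and it takes a genuinely different route from the paper's. The paper proceeds by a compactness argument: it shows that each $\mathcal{A}_{x,u^*,f}$ is a nonempty weak$^*$-closed subset of the compact space $Y\times B_{F^*}$, and then establishes the finite intersection property by constructing, for each finite family $f_1,\dots,f_n$, an auxiliary function $f=h_xg$ (with $g$ depending on the $f_i$) and showing via $2$-locality on the pairs $(f\otimes e,f_i)$ that $\mathcal{A}_{x,u^*,f\otimes e}\subseteq\bigcap_i\mathcal{A}_{x,u^*,f_i}$. The key step there uses smoothness of $E$ to identify $u^*$ with $v^*\circ J(y)$.

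Your approach bypasses all topology: a single fixed probe $p=h_x\otimes e_0$ already has $\mathcal{A}_{x,u^*,p}$ reduced to a singleton (this is where you use smoothness of $F$ rather than of $E$), so one application of $2$-locality on $(f,p)$ suffices for every $f$. This is strictly more elementary---no weak$^*$ compactness, no closedness verification, no finite-intersection bookkeeping---and it yields the stronger conclusion that $\mathcal{A}_{x,u^*}$ is itself a singleton, which the paper does not state. A minor quibble with your commentary: the compactness/finite-intersection route is not infeasible as you suggest (the paper carries it out, with the auxiliary $f\otimes e$ playing exactly the probe role you isolate); your method is simply a cleaner way to reach the same destination.
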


\begin{proof} We shall only prove the statement concerning $2$-local standard isometries. The other statement follows with similar arguments and Theorem \ref{thm BoFleJam}.\smallskip

Assume that $\Delta$ is a $2$-local standard isometry, and let us fix $x\in X$ and $u^*\in S_{E^*}$.\smallskip

We shall first prove that $\mathcal{A}_{x,u^*,f}$ is not empty for every $f\in\Lip(X,E)$. Namely, there exists a standard isometry $T_f\colon\Lip(X,E)\to\Lip(Y,F)$ satisfying $\Delta(f)=T_f(f)$, and hence there exist $\varphi_f\in \Iso_{<2}(Y,X)$ and a Lipschitz mapping $J_f\colon Y\to\Iso(E,F)$ constant on each $2$-component of $Y$ such that
$$
T_f(h)(y)=J_f(y)(h(\varphi_f(y)))
$$
for all $h\in\Lip(X, E)$ and $y\in Y$. We can choose $y_0\in Y$ such that $\varphi_f(y_0)=x$, and thus
$$
\Delta(f)(y_0)=J_f(y_0)(f(\varphi_f(y_0)))=J_f(y_0)(f(x)).
$$
Taking $v_0^*=u^*J_f (y_0)^{-1}\in B_{F^*}$, we have $v_0^*(\Delta(f)(y_0))=u^*(f(x))$, and hence $(y_0,v_0^*)\in \mathcal{A}_{x,u^*,f}$, as required.\smallskip

Since $F$ is reflexive, the set $B_{F^*}$ is compact when equipped with the weak topology. The space $Y\times B_{F^*}$ will be regarded as a compact space with the product topology of both compact spaces. We claim that $\mathcal{A}_{x,u^*,f}$ is a closed subset of $Y\times B_{F^*}$ for every $f\in \Lip(X,E)$. To prove the claim, assume that $(y_\lambda, v^*_\lambda)$ is a net in $\mathcal{A}_{x,u^*, f}$ converging to $(y,v^*)$ in $Y\times B_{F^*}$, that is, $y_\lambda\to y$ in $Y$ and $v^*_\lambda\to v^*$ weakly. For each $\lambda$, we have $v_\lambda^*(\Delta (f)(y_\lambda))=u^*(f(x))$. Since $\Delta(f)\in \Lip(Y,F)$, it follows that $\Delta(f)(y_\lambda)\to \Delta(f) (y)$ in the norm topology of $F$. Since $v^*_\lambda\to v^*$ weakly, it can be easily seen that $v^*_\lambda(\Delta (f)(y_\lambda))$ converges to $v^*(\Delta (f)(y))$, and thus $v^*(\Delta(f)(y))=u^*(f(x))$, which proves the claim.\smallskip

Let us observe that the arguments above are valid to prove that for each $x\in X$, $u^*\in B_{E^*}$ and $f\in \Lip(X,E)$ the set $\mathcal{A}_{x,u^*,f}$ is non-empty and closed in $Y\times B_{F^*}.$\smallskip

We return to the case in which $u^*\in S_{E^*}$. Since $Y\times B_{F^*}$ is compact, in order to show that the set $\mathcal{A}_{x,u^*}$ is not empty it suffices to show that the family
$$
\left\{\mathcal{A}_{x,u^*,f}\colon  f\in \Lip(X,E)\right\}
$$
has the finite intersection property. Let $f_1,f_2,\ldots,f_n\in\Lip(X,E)$ and define $g\colon X\to\mathbb{R}$ by
$$
g(z)=1+\sum_{i=1}^n\left(2\|f_i\|_L-\|f_i(z)-f_i(x)\|\right) \quad (z\in X).
$$
It can be easily checked that $g\in\Lip(X,\mathbb{R}^+)$ and $f=h_xg\in\Lip(X,\mathbb{R}_0^+)$. Recall that $h_x$ is in $\Lip(X,[0,1])$ with $\left\|h_x\right\|_L=1$ and satisfies that $h_x^{-1}(\{1\})=\{x\}$. Since $E$ is reflexive, we can take $e\in S_E$ such that $u^*(e)=1$. Given $1\leq i\leq n$, since $\Delta$ is a $2$-local standard isometry, there exists a standard isometry $T_{f\otimes e, f_i}\colon\Lip(X,E)\to\Lip(Y,F)$ such that $\Delta (f\otimes e)=T_{f\otimes e, f_i}(f\otimes e)$ and $\Delta(f_i)=T_{f\otimes e,f_i}(f_i)$.
This implies the existence of $\varphi_{f\otimes e,f_i}\in \Iso_{<2}(Y,X)$ and a Lipschitz map $y\mapsto J_{f\otimes e, f_i}(y)$ from $Y$ into $\Iso(E,F)$ such that
$$
T_{f\otimes e, f_i}(h)(y)=J_{f\otimes e, f_i}(y)(h(\varphi_{f\otimes e,f_i}(y))),\quad\forall y\in Y,\ h\in\Lip(X,E).
$$
For each $(y,v^*)\in \mathcal{A}_{x,u^*,f\otimes e}$ and $1\leq i\leq n$, one can derive that
\begin{align*}
0<h_x(x)g(x)=f(x)&=u^*((f\otimes e)(x))=v^*(\Delta (f\otimes e)(y))\\
                 &=v^*(J_{f\otimes e, f_i}(y)((f\otimes e)(\varphi_{f\otimes e, f_i}(y))))\\
                 &=f(\varphi_{f\otimes e, f_i}(y)) v^*(J_{f\otimes e, f_i}(y)(e))\\
                 &\leq f(\varphi_{f\otimes e,f_i}(y))\\
								 &=h_x(\varphi_{f\otimes e, f_i}(y))g(\varphi_{f\otimes e, f_i}(y)).
\end{align*}
Here, $v^*J_{f\otimes e, f_i}(y)$ belongs to $B_{E^*}$ as $J_{f\otimes e, f_i}(y)$ is a linear surjective isometry.\smallskip

Since $h_x$ and $g$ attain their maximum at $x$, we deduce that $h_xg$ attains its maximum at $x$ and $\varphi_{f\otimes e,f_i}(y)$. It follows that $h_x$ attains its maximum at $\varphi_{f\otimes e,f_i}(y)$,
and therefore $h_x(\varphi_{f\otimes e,f_i}(y))=h_x(x)=1$. The property $h_x^{-1}(\{1\})=\{x\}$ implies that $\varphi_{f\otimes e,f_i}(y)=x$.  In this case, we have $v^*(J_{f\otimes e, f_i}(y)(e))=1$, and hence $u^*=v^*(J_{f\otimes e, f_i}(y))$ because $E$ is smooth. Consequently, we have
$$
v^*\left(\Delta (f_i)(y)\right)=v^*\left(J_{f\otimes e,f_i}(y)(f_i(\varphi_{f\otimes e,f_i}(y)))\right)=u^*(f_i(x)).
$$
This means that $(y,v^*)\in\mathcal{A}_{x,u^*,f_i}$ for all $1\leq i \leq n$, and thus the family $\{\mathcal{A}_{x,u^*,f}\colon f\in\Lip(X, E)\}$ satisfies the finite intersection property.\smallskip

Concerning the last statement, if $u^*\in S_{E^*}$ and $(y,v^*)\in \mathcal{A}_{x, u^*}$, then $u^*(f(x))=v^*(\Delta(f)(y))$ for all $f\in \Lip(X,E)$. Clearly, $\|v^*\|\leq 1$. By the reflexivity of $E$, we can find $u\in S_E$ such that $u^*(u)=1$. Therefore, since $\Delta$ is an isometry, we get
$$
1=u^*(u)=u^*(h_x\otimes u(x))=v^*(\Delta(h_x\otimes u)(y))\leq\|v^*\|\left\|h_x\otimes u\right\|_L\leq\left\|v^*\right\|.
$$
\end{proof}

For each mapping $\Delta\colon\Lip(X,E)\to\Lip(Y,F)$, let us define
$$
\mathfrak{Y}=\left\{(y,v^*)\in Y\times S_{F^*} \colon (y,v^*)\in \mathcal{A}_{x,u^*} \hbox{ for some  } x\in X, \ u^*\in S_{E^*}\right\}.
$$
Let $\pi_1\colon Y\times F^*\to Y$ denote the projection of $Y\times F^*$ onto the first component. The symbol $Y_0$ will denote the set $\pi_1(\mathfrak{Y}).$

\begin{lemma}\label{lem:vL2}
Under the hypothesis of Lemma \ref{lem:vL1}, the following statements hold:
\begin{enumerate}[$(a)$]
\item Let $(y,v_1^*),(y,v_2^*)\in \mathfrak{Y}$,  $u_1^*,u_2^*\in S_{E^*}$ and $x_1,x_2\in X$. If $(y,v_1^*)\in \mathcal{A}_{x_1,u_1^*}$ and $(y,v_2^*)\in \mathcal{A}_{x_2,u_2^*}$, then $x_1=x_2$.
\item Let $(y,v^*)\in\mathfrak{Y}$, $u_1^*,u_2^*\in S_{E^*}$ and $x_0\in X$. If $(y,v^*)\in \mathcal{A}_{x_0,u_1^*}$ and $(y,v^*)\in \mathcal{A}_{x_0,u_2^*}$, then $u_1^*=u_2^*$.
\item We can define a surjective mapping $\psi$ from $Y_0$ to $X$ defined by the following property: for each $y\in Y_0,$ $\psi(y)$ is the unique element in $X$ such that for any $v^*\in S_{F^*}$ with $(y,v^*)\in \mathfrak{Y}$, there exists a unique $u^*\in S_{E^*}$  such that $(y,v^*)\in \mathcal{A}_{\psi(y),u^*}$.
\end{enumerate}
\end{lemma}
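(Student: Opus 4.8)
The plan is to establish the three assertions in the order $(b)$, $(a)$, $(c)$: part $(b)$ is a one-line reduction, part $(a)$ carries essentially all the weight, and part $(c)$ is then a formal bookkeeping consequence of $(a)$ and $(b)$ together with Lemma \ref{lem:vL1}. For definiteness I would write the argument for the $2$-local standard isometry case, the $2$-local isometry case being identical once Theorem \ref{thm BoFleJam} is used in place of the definition of standard isometry.

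For $(b)$ I would simply unfold the definitions. The hypotheses say that $v^*(\Delta(f)(y))=u_1^*(f(x_0))$ and $v^*(\Delta(f)(y))=u_2^*(f(x_0))$ hold for every $f\in\Lip(X,E)$. As the left-hand sides agree, $u_1^*(f(x_0))=u_2^*(f(x_0))$ for all such $f$; evaluating on the constant functions $f\equiv e$ (which lie in $\Lip(X,E)$ and satisfy $f(x_0)=e$) as $e$ runs over $E$ then forces $u_1^*=u_2^*$.

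The crux is $(a)$, and here the idea is to play $2$-locality against the peak functions $h_{x_1},h_{x_2}$. Since $E$ is reflexive I can first pick $e_1,e_2\in S_E$ with $u_i^*(e_i)=1$. Applying the $2$-locality of $\Delta$ to the single pair $\{h_{x_1}\otimes e_1,\,h_{x_2}\otimes e_2\}$ produces one standard isometry $T$, of the form $T(h)(\cdot)=J(\cdot)\bigl(h(\varphi(\cdot))\bigr)$ with $\varphi\in\Iso_{<2}(Y,X)$ and $J(\cdot)\in\Iso(E,F)$, reproducing $\Delta$ on both functions. Evaluating $\Delta(h_{x_1}\otimes e_1)=T(h_{x_1}\otimes e_1)$ at the given $y$, pairing with $v_1^*$, and using that $(y,v_1^*)\in\mathcal{A}_{x_1,u_1^*}$, I obtain
$$
1=u_1^*(e_1)=v_1^*\bigl(\Delta(h_{x_1}\otimes e_1)(y)\bigr)=h_{x_1}(\varphi(y))\,v_1^*\bigl(J(y)(e_1)\bigr).
$$
Because $v_1^*\in S_{F^*}$, $J(y)$ is a surjective isometry and $\|e_1\|=1$, the last factor has modulus at most $1$ while $h_{x_1}(\varphi(y))\in[0,1]$; hence $h_{x_1}(\varphi(y))=1$, and the identity $h_{x_1}^{-1}(\{1\})=\{x_1\}$ gives $\varphi(y)=x_1$. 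Running the identical computation with $h_{x_2}\otimes e_2$ and $v_2^*$ gives $\varphi(y)=x_2$, so $x_1=\varphi(y)=x_2$.

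Finally, for $(c)$ I would argue as follows. Part $(a)$ shows that whenever $(y,v^*)\in\mathcal{A}_{x,u^*}$ the point $x$ depends only on $y$, so setting $\psi(y)$ equal to this common value gives a well-defined map on $Y_0$; its surjectivity is immediate from Lemma \ref{lem:vL1}, since for each $x\in X$ and fixed $u^*\in S_{E^*}$ any $(y,v^*)\in\mathcal{A}_{x,u^*}$ lies in $\mathfrak{Y}$ (so $y\in Y_0$) and satisfies $\psi(y)=x$, while uniqueness of the companion $u^*$ attached to a given $v^*$ is exactly part $(b)$. The only real obstacle is step $(a)$: the decisive point is to invoke $2$-locality for the \emph{pair} of peak functions simultaneously, so that a \emph{single} symbol $\varphi$ is compelled to equal both $x_1$ and $x_2$. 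Reflexivity is used only to produce the norming vectors $e_i$, and the representation of standard isometries (respectively, Theorem \ref{thm BoFleJam}) is what makes this maximum-attainment argument available.
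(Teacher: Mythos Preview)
Your proposal is correct and follows essentially the same route as the paper: part $(b)$ is the trivial unfolding (the paper uses $h_{x_0}\otimes e$ rather than constant functions, but either choice works), part $(a)$ is proved exactly as in the paper by applying $2$-locality to the pair $h_{x_1}\otimes e_1,\,h_{x_2}\otimes e_2$ so that a single symbol $\varphi$ is forced to satisfy $\varphi(y)=x_1=x_2$ via the peak property of $h_{x_j}$, and part $(c)$ is the same formal assembly from $(a)$, $(b)$ and Lemma~\ref{lem:vL1}. The only difference is the order of presentation.
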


\begin{proof} We shall only prove the case in which $\Delta$ is a $2$-local standard isometry, the other statement follows with similar arguments and Theorem \ref{thm BoFleJam}.\smallskip

$(a)$ Assume that $(y,v_1^*)\in \mathcal{A}_{x_1,u_1^*}$ and $(y,v_2^*)\in \mathcal{A}_{x_2,u_2^*}$. Then, for each $j=1,2$, we have
$$
u^*_j(f(x_j))=v^*_j(\Delta(f)(y))
$$
for every $f\in\Lip(X,E)$. For $j=1,2$, by the reflexivity of $E$ choose $u_j\in S_E$ such that $u^*_j(u_j)=1$ and consider the function $f_j=h_{x_j}\otimes u_j$. It follows from the hypothesis on $\Delta$ that there exist $\varphi_{f_1,f_2}\in\Iso_{<2}(Y,X)$ and a Lipschitz map $y\mapsto J_{f_1,f_2}(y)$ from $Y$ into $\Iso(E,F)$ such that
$$
\Delta(f_j)(y)=J_{f_1, f_2}(y)(f_j(\varphi_{f_1,f_2}(y)))\quad (j=1,2).
$$
Therefore
$$
1=u^*_j(f_j(x_j))=v^*_j(\Delta(f_j)(y))=h_{x_j}(\varphi_{f_1,f_2}(y))v^*_j(J_{f_1,f_2}(y)(u_j))\quad (j=1,2),
$$
which implies that $v^*_j(J_{f_1,f_2}(y)(u_j))=1$ for $j=1,2$ and hence $h_{x_j}(\varphi_{f_1,f_2}(y))=1$ for $j=1,2$. Since $h_{x_j}^{-1}(\{1\})=\{x_j\}$ for $j=1,2$, it follows that $x_2=\varphi_{f_1,f_2}(y)=x_1$, witnessing the first statement.\smallskip

$(b)$ Suppose $(y,v^*)\in \mathcal{A}_{x_0,u_1^*}$ and $(y,v^*)\in \mathcal{A}_{x_0,u_2^*}$. In this case the identities
$$
u^*_1(f(x_0))=v^*(\Delta(f)(y))=u^*_2(f(x_0))
$$
hold for every $f\in\Lip(X,E)$. In particular,
$$
u^*_1(e)=u^*_1(h_{x_0}\otimes e(x_0))=u^*_2(h_{x_0}\otimes e(x_0))=u^*_2(e)
$$
for all $e\in E$, and thus $u_1^*=u_2^*$.\smallskip

$(c)$ Let $Y_0=\pi_1(\mathfrak{Y})$ be the set of all $y\in Y$ such that $(y, v^*)\in \mathfrak{Y}$ for some $v^*\in S_{F^*}$. By (a) and (b), for each $y\in Y_0$, there exists a unique $\psi(y)\in X$ such that for each $(y,v^*)\in \mathfrak{Y}$, we have $(y,v^*)\in \mathcal{A}_{\psi(y),u^*}$ for a unique $u^*\in S_{E^*}$. The surjectivity of $\psi$ follows from Lemma \ref{lem:vL1}.
\end{proof}

We can establish now a Lipschitz version of \cite[Theorem 6]{AF09}.

\begin{theorem}\label{thm:vL3}
Suppose that $X$ and $Y$ are compact and pathwise connected metric spaces, and $E$ is a smooth and reflexive Banach space which is $2$-iso-reflexive.
Assume that $\Delta\colon\Lip(X,E)\to\Lip(Y,E)$ is a $2$-local isometry.
Then there exist a subset $Y_0$ of $Y$, a Lipschitz bijection $\psi$ from $Y_0$ onto $X$, and a mapping $y\in Y_0\mapsto V(y)\in\Iso(E)$, which is continuous from $Y_0$ into $\Iso(E)$ with the strong operator topology {\rm(}SOT\;{\rm)}, such that
$$
\Delta(f)(y)=V(y)(f(\psi(y))),\quad\forall y\in Y_0,\ f\in\Lip(X,E).
$$
\end{theorem}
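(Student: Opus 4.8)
The surjective map $\psi\colon Y_0\to X$ is already provided by Lemma \ref{lem:vL2}$(c)$, so the plan is to manufacture the fibrewise isometries $V(y)$, establish the representation formula, and then verify that $\psi$ is injective and Lipschitz and that $V$ is SOT-continuous. Throughout I work in the pathwise connected case, so that Theorem \ref{thm BoFleJam} applies to every surjective isometry produced by the $2$-locality of $\Delta$. For $y\in Y_0$ write $x=\psi(y)$ and define $\Phi_y\colon E\to E$ by $\Phi_y(e)=\Delta(h_{x}\otimes e)(y)$. The heart of the argument is to show that $\Phi_y$ is a $2$-local isometry on $E$; once that is in place, the hypothesis that $E$ is $2$-iso-reflexive forces $\Phi_y$ to be a surjective linear isometry, and I set $V(y):=\Phi_y\in\Iso(E)$.

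The main obstacle, and the technical core, is a localisation estimate: $\|\Delta(h_x\otimes e)(y)\|=\|e\|$ for every $e\in E$ and every $y\in Y_0$ with $x=\psi(y)$. To prove it, fix $v^*\in S_{E^*}$ with $(y,v^*)\in\mathfrak{Y}$ and let $u^*\in S_{E^*}$ be the unique functional with $(y,v^*)\in\mathcal{A}_{x,u^*}$, so that $v^*(\Delta(g)(y))=u^*(g(x))$ for all $g\in\Lip(X,E)$. By reflexivity choose $e_0\in S_E$ with $u^*(e_0)=1$ and apply the $2$-locality of $\Delta$ to the pair $(h_x\otimes e,\,h_x\otimes e_0)$; Theorem \ref{thm BoFleJam} yields a Lipschitz homeomorphism $\varphi\colon Y\to X$ and a map $W\colon Y\to\Iso(E)$ with $\Delta(h_x\otimes e)(y)=h_x(\varphi(y))W(y)(e)$ and the analogous identity for $e_0$. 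Evaluating $v^*$ on the $e_0$-identity gives $1=u^*(e_0)=h_x(\varphi(y))\,v^*(W(y)(e_0))$; since both factors have modulus at most $1$, we get $h_x(\varphi(y))=1$, and hence $\varphi(y)=x$ because $h_x^{-1}(\{1\})=\{x\}$. Therefore $\Delta(h_x\otimes e)(y)=W(y)(e)$ and $\|\Delta(h_x\otimes e)(y)\|=\|e\|$. The same peaking mechanism, now applied to an arbitrary pair $(h_x\otimes a,\,h_x\otimes b)$ and using the norm identity just proved to force $\varphi(y)=x$, shows that $\Phi_y$ agrees on $\{a,b\}$ with the element $W(y)\in\Iso(E)$; thus $\Phi_y$ is a $2$-local isometry, so $V(y)=\Phi_y\in\Iso(E)$.

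With $V(y)$ in hand I would prove $\Delta(f)(y)=V(y)(f(\psi(y)))$ by pairing $f$ with $h_x\otimes f(x)$ when $f(x)\neq 0$ (respectively with $h_x\otimes e_0$, $e_0\in S_E$, when $f(x)=0$): the localisation estimate again forces the associated homeomorphism to send $y$ to $x$, whence $\Delta(f)(y)=W(y)(f(x))=\Delta(h_x\otimes f(x))(y)=V(y)(f(x))$ in the first case, and $\Delta(f)(y)=W(y)(0)=0=V(y)(0)$ in the degenerate one. Injectivity of $\psi$ then follows: if $\psi(y_1)=\psi(y_2)=x$ with $y_1\neq y_2$, pick any $e_0\in S_E$; the localisation estimate gives $\|\Delta(h_x\otimes e_0)(y_i)\|=1$ for $i=1,2$, while writing $\Delta(h_x\otimes e_0)=T(h_x\otimes e_0)$ for a single surjective isometry $T$ with homeomorphism $\varphi_T$ forces $h_x(\varphi_T(y_i))=1$, that is $\varphi_T(y_1)=\varphi_T(y_2)=x$, contradicting the injectivity of $\varphi_T$.

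The remaining regularity statements are comparatively soft and use the representation directly. For the Lipschitz property of $\psi$, evaluate the formula on $f_{x_1}\otimes e_0$, where $x_1=\psi(y_1)$, $f_{x_1}(z)=d(z,x_1)$ and $e_0\in S_E$: since $\Delta$ acts as a surjective isometry on this single function, $L(\Delta(f_{x_1}\otimes e_0))\le\|f_{x_1}\otimes e_0\|_L=\|f_{x_1}\|_L\le\max\{1,\diam(X)\}$, and evaluating at $y_1,y_2$ gives $d(\psi(y_1),\psi(y_2))=\|\Delta(f_{x_1}\otimes e_0)(y_1)-\Delta(f_{x_1}\otimes e_0)(y_2)\|\le\max\{1,\diam(X)\}\,d(y_1,y_2)$, because the first term vanishes and the second has norm $d(\psi(y_1),\psi(y_2))$. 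For the SOT-continuity of $V$, observe that for fixed $e\in E$ the constant function $\mathbf{e}\equiv e$ satisfies $\mathbf{e}(\psi(y))=e$ for all $y\in Y_0$, so $V(y)(e)=\Delta(\mathbf{e})(y)$; since $\Delta(\mathbf{e})\in\Lip(Y,E)$ is continuous, $y\mapsto V(y)(e)$ is continuous for every $e$, which is precisely the SOT-continuity of $V$ on $Y_0$.
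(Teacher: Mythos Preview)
Your proof is correct and follows essentially the same route as the paper: the same peaking functions $h_x\otimes e$, the same mechanism of forcing $\varphi(y)=\psi(y)$ via Theorem~\ref{thm BoFleJam} together with the data from Lemmas~\ref{lem:vL1}--\ref{lem:vL2}, the $2$-iso-reflexivity of $E$ to linearise $V(y)$, and the distance functions $f_{\psi(y_1)}\otimes e$ and constants $1_X\otimes e$ for the Lipschitz and SOT-continuity conclusions. The only difference is organisational: you isolate the norm identity $\|\Delta(h_x\otimes e)(y)\|=\|e\|$ first and define $V(y)(e)=\Delta(h_{\psi(y)}\otimes e)(y)$ concretely, whereas the paper defines $V(y)(e)=\Delta(f)(y)$ for an arbitrary $f$ with $f(\psi(y))=e$ and proves well-definedness---but the underlying argument is the same.
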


\begin{proof}
Let $Y_0$ and $\psi\colon Y_0\to X$ be the set and the surjective mapping defined in Lemma \ref{lem:vL2}. We shall first show that $\psi$ is injective. To this end, let $y_1, y_2\in Y_0$ be such that $x_0=\psi(y_1)=\psi(y_2)\in X$. By definition, for each $j=1,2$ we have
$$
v_j^*(\Delta(f)(y_j))=u_j^*(f(x_0))
$$
for all $f\in\Lip(X,E)$, where $u_j^*,v_j^*\in S_{E^*}$ with $(y_j,v_j^*)\in\mathcal{A}_{x_0,u_j^*}$. Given $j\in\{1,2\}$, choose $u_j\in S_E$ such that $u_j^*(u_j)=1$ and take $f_j=h_{x_0}\otimes u_j$. Then we have
$$
v_j^*(\Delta (f_j)(y_j))=u_j^*(f_j(x_0))=u_j^*(u_j)=1\quad (j=1,2).
$$
Since $\Delta$ is a $2$-local isometry, there exists a linear isometry $\Delta_{f_1,f_2}$ from $\Lip(X,E)$ onto $\Lip(Y,E)$ which coincides with $\Delta$ at $f_1$ and $f_2$. By Theorem \ref{thm BoFleJam}, there are a Lipschitz homeomorphism $\varphi_{f_1,f_2}\colon Y\to X$ and a Lipschitz mapping $y\mapsto V_{f_1, f_2}(y)$ from $Y$ into $\Iso(E)$, such that
$$
\Delta(f_j)(y_j)=V_{f_1,f_2}(y_j)(f_j(\varphi_{f_1,f_2}(y_j)))=h_{x_0}(\varphi_{f_1,f_2}(y_j))V_{f_1,f_2}(y_j)(u_j),
$$ for all $j=1,2$. It follows that
$$
1=v_j^*(\Delta (f_j)(y_j))=h_{x_0}(\varphi_{f_1,f_2}(y_j))\ v_j^*(V_{f_1,f_2}(y_j)(u_j))\quad (j=1,2),
$$
which implies $v_j^*(V_{f_1,f_2}(y_j)(u_j))=1$, and $h_{x_0}(\varphi_{f_1,f_2}(y_j))=1$ for $j=1,2$. Since $h_{x_0}^{-1}(\{1\})=\{x_0\}$, we infer that $\varphi_{f_1,f_2}(y_j)=x_0$ for $j=1,2$. Since $\varphi_{f_1,f_2}$ is injective, we get $y_1=y_2$, which proves that $\psi$ is injective.\smallskip

For any $y\in Y_0,$ let us define a mapping $V(y) \colon  E\to E$ given by
$$
V(y)(e)=\Delta (f)(y),
$$
where $f$ is any function in $\Lip(X,E)$ satisfying $f(\psi(y))=e$. We first show that $V(y)$ is well-defined. In a first step, fix $e\in S_E$, take $g=h_{\psi(y)}\otimes e$ and let $f$ be any function in $\Lip(X,E)$ for which $f(\psi(y))=e$. Recall that $g$ is in $\Lip(X,E)$ with $||g||_L=1$ and satisfies that $g(\psi(y))=e$ and $\{z\in X: ||g(z)||=1\}=\{\psi(y)\}$. Since $\Delta$ is a $2$-local isometry, there exist a Lipschitz homeomorphism $\varphi_{f,g}\colon  Y\to X$ and a Lipschitz mapping $y\mapsto V_{f,g}(y)$ from $Y$ into $\Iso(E)$ such that
$$
\Delta(f)(y)=V_{f,g}(y)(f(\varphi_{f,g}(y)))
$$
and
$$
\Delta(g)(y)=V_{f,g}(y)(g(\varphi_{f,g}(y))).
$$
Let $e^*\in S_{E^*}$ such that $e^*(e)=\|e\|$. By Lemmas \ref{lem:vL1} and \ref{lem:vL2}, there exists $\tilde{e}^*\in S_{E^*}$ such that
$$
\tilde{e}^*(\Delta (g)(y))=e^*(g(\psi(y)))=e^*(e)=\|e\|=1.
$$
It follows that
$$
1=\tilde{e}^*(\Delta (g)(y))=\tilde{e}^*[V_{f,g}(y)(g(\varphi_{f,g}(y)))]\leq\|g(\varphi_{f,g}(y))\|\leq \|g\|_\infty=1.
$$
Since $\{z\in X: \|g(z)\|=1\}=\{\psi(y)\}$, we deduce that $\varphi_{f,g}(y)=\psi(y)$, and then
\begin{gather*}
\Delta (f)(y)=V_{f,g}(y)(f(\varphi_{f,g}(y)))=V_{f,g}(y)(f(\psi(y)))\\
=V_{f,g}(y)(g(\psi(y)))=V_{f,g}(y)(g(\varphi_{f,g}(y)))=\Delta(g)(y).
\end{gather*}
We have just proved that if $f_1,f_2$ are functions in $\Lip(X,E)$ satisfying $f_1(\psi(y))=f_2(\psi(y))=e$ for some $e\in S_E$, then
$$
\Delta(f_1)(y)=\Delta (f_2)(y).
$$
Now, let $e$ be a non-zero element in $E$. It is well known that every $2$-local isometry is 1-homogeneous, that is, $\Delta(\lambda f)=\lambda\Delta (f)$ for every $\lambda\in\mathbb{K}$ and $f\in\Lip(X,E)$ (compare \cite[Lemma 2.1]{NiPe2014} or \cite[Lemma 2.1]{CaPe2015}). Let us take $f_1,f_2$ in $\Lip(X, E)$ satisfying $f_1(\psi(y))=f_2(\psi(y))=e$. By the above-proved equality and the homogeneity of $\Delta$, we obtain
$$
\Delta(f_1)(y)=\|e\|\Delta\left(\frac{1}{\|e\|}f_1\right)(y)=\|e\|\Delta\left(\frac{1}{\|e\|}f_2\right)(y)=\Delta (f_2)(y).
$$
Finally, let us suppose that $e=0$. Since $y\in Y_0$, Lemmas \ref{lem:vL1} and \ref{lem:vL2} assure the existence of $u^*,v^*\in S_{E^*}$ such that
$$
v^*(\Delta(h)(y))=u^*(h(\psi(y)))
$$
for all $h\in\Lip(X,E)$. Let us take $u\in S_{E}$ with $u^*(u)=1$ and let $g=h_{\psi(y)}\otimes u$. In such a case, $v^*(\Delta(g)(y))=u^*(g(\psi (y)))=1$. Let $f$ be any function in $\Lip(X,E)$ for which $f(\psi(y))=0$. Applying that $\Delta$ is a $2$-local isometry, we deduce the existence of a Lipschitz homeomorphism $\varphi_{f,g}\colon Y\to X$ and a Lipschitz mapping $y\mapsto V_{f,g}(y)$ from $Y$ into $\Iso(E)$, such that
$$
\Delta (f)(y)=V_{f,g}(y)(f(\varphi_{f,g}(y))
$$
and
$$
\Delta (g)(y)=V_{f,g}(y)(g(\varphi_{f,g}(y)).
$$
Since $1=v^*(\Delta(g)(y))=v^*(V_{f,g}(y)(g(\varphi_{f,g}(y))))$, it follows that $\|g(\varphi_{f,g}(y))\|=1$, and hence $\psi(y)=\varphi_{f,g}(y)$. Therefore $$\Delta (f)(y)=V_{f,g}(y)(f(\varphi_{f,g}(y)))=V_{f,g}(y)(f(\psi(y)))=0.$$ We have established that
\begin{equation}\label{eq Deltafy vanishes when fpsiy does} \Delta (f)(y)=0, \hbox{ for every $f\in\Lip(X,E)$ with $f(\psi(y))=0$.}
\end{equation}
The previous arguments prove that $V(y)$ is well-defined.\smallskip

We shall show next that $V(y)$ is a surjective linear isometry on $E$ for each $y\in Y_0$. To this end, fix $y\in Y_0$ and let $u_0, v_0$ be elements in $E$. Let $f_0, g_0$ be arbitrary elements in $\Lip(X, E)$ attaining their norm only at $\psi(y)$ and satisfying that $f_0(\psi(y))=u_0$ and $g_0(\psi(y))=v_0$. It follows from the definition of $V(y)$ and the hypothesis on $\Delta$ that
$$
V(y)(u_0)=\Delta (f_0) (y)=V_{f_0,g_0}(y)(u_0)
$$
and
$$
V(y)(v_0)=\Delta (g_0)(y)=V_{f_0,g_0}(y)(v_0),
$$
where $V_{f_0,g_0}(y)\in \Iso(E)$. This shows that $V(y)$ is a $2$-local isometry on $E$. Since, by hypothesis, $E$ is $2$-iso-reflexive, we deduce that $V(y)$ is a surjective linear isometry on $E$.\smallskip

It is now time to explore the continuity of the mapping $V\colon Y_0\to\Iso(E)$. Suppose that $(y_n)$ is a sequence in $Y_0$ converging to $y_0\in Y_0$. For each $e\in E$, we have
$$
\|V(y_n)(e)-V(y_0)(e)\|=\|\Delta(1_X\otimes e)(y_n)-\Delta(1_X\otimes e)(y_0)\|$$ $$\leq\left\|\Delta(1_X\otimes e)\right\|_L d(y_n,y_0) = \left\|1_X\otimes e\right\|_L d(y_n,y_0)= \|e \|\ d(y_n,y_0)
$$											
for all $n\in\mathbb{N}$, and hence the mapping $V$ is continuous from $Y_0$ into $\Iso(E)$, when the latter is equipped with the SOT topology.\smallskip

We shall finally show that $\psi$ is a Lipschitz map. Indeed, fix any $e\in S_E$, and for any $y_1\neq y_2\in Y_0$, take $f_1=f_{\psi(y_1)}\otimes e$ and $f_2=f_{\psi(y_2)}\otimes e$. By hypothesis, we can find a surjective linear isometry $\Delta_{f_1,f_2}\colon\Lip(X,E)\to\Lip(Y,E)$ whose evaluations at $f_1$ and $f_2$ coincide with the corresponding evaluations of $\Delta$ at these two points. Therefore
$$
L(\Delta(f_1))\leq\|\Delta(f_1)\|_L=\|\Delta_{f_1, f_2}(f_1)\|_L$$ $$=\|f_1\|_L=\|f_{\psi(y_1)}\|_L\leq\max\left\{\diam(X),1\right\},
$$
and thus
$$
\|\Delta(f_1)(y_1)-\Delta(f_1)(y_2)\|\leq (\max\left\{\diam(X),1\right\})\ d(y_1, y_2).
$$
On the other hand, by \eqref{eq Deltafy vanishes when fpsiy does} we have $\Delta(f_1)(y_1)=0,$
and by the properties we established to show that $V$ is well defined, we deduce that
$$
\Delta(f_1)(y_2)=f_{\psi(y_1)}(\psi(y_2))\  V(y_2)(e) =d(\psi(y_1),\psi(y_2))\ V(y_2)(e).
$$
Since $V(y_2)$ is an isometry, we derive that
$$
d(\psi(y_1), \psi(y_2))=\|\Delta(f_1)(y_1)-\Delta(f_1)(y_2)\|\leq(\max\left\{\diam(X),1\right\})\ d(y_1,y_2).
$$
\end{proof}

Similar arguments to those given in the proof of Theorem \ref{thm:vL3} can be also applied to prove the following result.

\begin{theorem}\label{thm:vL3 AraujoDubarbie}
Let $X$ and $Y$ be compact metric spaces, and let $E$ be a smooth reflexive Banach space which is $2$-iso-reflexive. Let us assume that $\Delta\colon\Lip(X,E)\to\Lip(Y,E)$ is a $2$-local standard isometry.
Then there exist a subset $Y_0$ of $Y$, a Lipschitz bijection $\psi$ from $Y_0$ onto $X$, and a continuous mapping $V: Y_0\to (\Iso(E),\mathrm{SOT})$ such that
$$
\Delta(f)(y)=V(y)(f(\psi(y))),\quad\forall y\in Y_0, \ f\in\Lip(X,E).
$$ $\hfill\Box$
\end{theorem}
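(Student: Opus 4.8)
The plan is to transcribe, almost verbatim, the proof of Theorem \ref{thm:vL3}, replacing every appeal to Theorem \ref{thm BoFleJam} by the defining representation of a standard isometry. The set $Y_0=\pi_1(\mathfrak{Y})$ and the surjection $\psi\colon Y_0\to X$ are already furnished by the standard-isometry versions of Lemmas \ref{lem:vL1} and \ref{lem:vL2}, so the whole burden is to show that $\psi$ is an injective Lipschitz map and that the pointwise weights assemble into an SOT-continuous family $V\colon Y_0\to\Iso(E)$ with $\Delta(f)(y)=V(y)(f(\psi(y)))$. Wherever the argument for Theorem \ref{thm:vL3} used a $2$-local isometry and extracted, via Theorem \ref{thm BoFleJam}, a Lipschitz homeomorphism $\varphi_{f,g}$ together with a weight $V_{f,g}(y)\in\Iso(E)$, I would instead invoke the hypothesis that $\Delta$ is a $2$-local standard isometry and extract a standard isometry $T_{f,g}$ agreeing with $\Delta$ at $f,g$, of the form $T_{f,g}(h)(y)=J_{f,g}(y)(h(\varphi_{f,g}(y)))$ with $\varphi_{f,g}\in\Iso_{<2}(Y,X)$ and $J_{f,g}(y)\in\Iso(E)$. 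Since the subsequent computations use only that $\varphi_{f,g}$ is a bijection and that the weight is a surjective linear isometry, they carry over unchanged.

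Concretely, to see that $\psi$ is injective I would fix $y_1,y_2\in Y_0$ with $x_0=\psi(y_1)=\psi(y_2)$, let $u_j^*,v_j^*\in S_{E^*}$ be the functionals with $(y_j,v_j^*)\in\mathcal{A}_{x_0,u_j^*}$, choose $u_j\in S_E$ with $u_j^*(u_j)=1$, set $f_j=h_{x_0}\otimes u_j$, and run the peaking argument: the identity $1=v_j^*(\Delta(f_j)(y_j))=h_{x_0}(\varphi_{f_1,f_2}(y_j))\,v_j^*(J_{f_1,f_2}(y_j)(u_j))$ forces $h_{x_0}(\varphi_{f_1,f_2}(y_j))=1$, whence $\varphi_{f_1,f_2}(y_j)=x_0$ from $h_{x_0}^{-1}(\{1\})=\{x_0\}$. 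This is the one place where the change of hypotheses actually matters: in Theorem \ref{thm:vL3} the injectivity of the base map came from Theorem \ref{thm BoFleJam} (and thus ultimately from pathwise connectedness), whereas here bijectivity is built into the definition of $\Iso_{<2}(Y,X)$, so injectivity of $\varphi_{f_1,f_2}$ is automatic and $y_1=y_2$ follows without any connectedness assumption. The map $V(y)$ would be defined by $V(y)(e)=\Delta(f)(y)$ for any $f$ with $f(\psi(y))=e$, and its well-definedness established exactly as before by treating the cases $e\in S_E$, $e\neq 0$, and $e=0$: for $e\in S_E$ one uses $g=h_{\psi(y)}\otimes e$ and the norm-peaking estimate $1=\tilde e^*(\Delta(g)(y))\le\|g(\varphi_{f,g}(y))\|\le\|g\|_\infty=1$ to force $\varphi_{f,g}(y)=\psi(y)$, and hence $\Delta(f)(y)=\Delta(g)(y)$; the nonzero case follows by $1$-homogeneity and the zero case reproduces \eqref{eq Deltafy vanishes when fpsiy does}.

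The remaining structural facts go through verbatim. Picking $f_0,g_0$ attaining their norm only at $\psi(y)$ with prescribed values there shows that $V(y)$ agrees with the single weight $J_{f_0,g_0}(y)\in\Iso(E)$ on two arbitrary vectors, so $V(y)$ is a $2$-local isometry on $E$; invoking the $2$-iso-reflexivity of $E$ upgrades it to a surjective linear isometry. SOT-continuity follows from $\|V(y_n)(e)-V(y_0)(e)\|=\|\Delta(1_X\otimes e)(y_n)-\Delta(1_X\otimes e)(y_0)\|\le\|e\|\,d(y_n,y_0)$, and the Lipschitz bound for $\psi$ from evaluating $\Delta$ on $f_{\psi(y_1)}\otimes e$ and $f_{\psi(y_2)}\otimes e$, using norm preservation $\|\Delta(f_1)\|_L=\|f_{\psi(y_1)}\|_L\le\max\{\diam(X),1\}$, the vanishing \eqref{eq Deltafy vanishes when fpsiy does}, and $\|V(y_2)(e)\|=1$. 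The representation $\Delta(f)(y)=V(y)(f(\psi(y)))$ is then immediate from the definition of $V(y)$ with the choice $e=f(\psi(y))$.

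Since the argument is a direct adaptation, I do not expect a genuine obstacle; the real work is bookkeeping. The most technical step is the well-definedness of $V(y)$, with its three-case split, and the single conceptually substantive point is the injectivity of $\psi$, where I would emphasize that dropping pathwise connectedness is harmless precisely because the bijectivity of the base map is part of the definition of a standard isometry rather than a consequence of Theorem \ref{thm BoFleJam}.
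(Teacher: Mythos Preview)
Your proposal is correct and is precisely the approach the paper intends: the paper states Theorem \ref{thm:vL3 AraujoDubarbie} without a separate proof, noting only that ``similar arguments to those given in the proof of Theorem \ref{thm:vL3} can be also applied,'' and you have faithfully carried out that adaptation, including correctly isolating the one substantive point (injectivity of $\psi$) where the bijectivity built into $\Iso_{<2}(Y,X)$ replaces the appeal to Theorem \ref{thm BoFleJam}.
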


Our next technical result has been borrowed from \cite{JV11}.

\begin{lemma}\cite[Proposition 3.2]{JV11}\label{lem:vL4}
Let $X$ be a metric space, and let $R=\{r_n\colon  n\in \mathbb{N}\}$ be a countable set of pairwise distinct points of $X$.
Then there exist Lipschitz functions $f,g\colon  X\to [0, 1]$ such that $f$ has a strict local maximum at every point of $R$ and
$$
\{z\in X\colon  (f(z),g(z))=(f(r_n),g(r_n))\}=\{r_n\},\quad\forall\,n\in \mathbb{N}.
$$
\end{lemma}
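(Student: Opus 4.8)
The plan is to realise both $f$ and $g$ as weighted series of elementary ``peak'' functions concentrated at the points of $R$, arranging the peaks to be (locally) disjointly supported so that $f$ automatically peaks at each $r_n$, and loading $g$ with pairwise distinct weights so that it resolves the global coincidences left open by $f$. Concretely, I would fix an enumeration $R=\{r_n:n\in\mathbb N\}$ and, for each $n$, choose a radius $\rho_n\in(0,2^{-n}]$ with $\rho_n<\tfrac12\,\mathrm{dist}(r_n,R\setminus\{r_n\})$; such a positive $\rho_n$ exists as soon as $r_n$ is isolated in $R$, and this choice makes the balls $B(r_n,\rho_n)$ pairwise disjoint. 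For each $n$ put $u_n(z)=\max\{0,1-d(z,r_n)/\rho_n\}$; then $u_n\in\Lip(X,[0,1])$ is $(1/\rho_n)$-Lipschitz, $u_n(r_n)=1$, $u_n^{-1}(\{1\})=\{r_n\}$, and $u_n$ is supported in $B(r_n,\rho_n)$. Finally I would set $f=\sum_n\rho_n u_n$ and $g=\sum_n\rho_n(1+2^{-n})u_n$; because the supports are disjoint and the heights are controlled by $2^{-n}$, both series define functions in $\Lip(X,[0,1])$ (after a harmless rescaling, if needed, to keep the range inside $[0,1]$).

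The strict local maximum of $f$ at $r_n$ is then immediate from the disjointness: on $B(r_n,\rho_n)$ one has $f=\rho_n u_n$, so $f(r_n)=\rho_n$ while $f(z)=\rho_n u_n(z)<\rho_n$ for every $z\in B(r_n,\rho_n)\setminus\{r_n\}$. For the separation property I would observe that $f(z)=f(r_n)=\rho_n>0$ forces $z\in B(r_k,\rho_k)$ for some $k$ with $\rho_k u_k(z)=\rho_n$, and in particular $u_k(z)>0$. Adding the requirement $g(z)=g(r_n)$, which on that same ball reads $\rho_k(1+2^{-k})u_k(z)=\rho_n(1+2^{-n})$, and dividing the two equalities, one gets $1+2^{-k}=1+2^{-n}$, hence $k=n$; then $\rho_n u_n(z)=\rho_n$ gives $u_n(z)=1$ and therefore $z=r_n$. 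Thus the distinctness of the weights $1+2^{-n}$ is exactly what allows $g$ to separate $r_n$ from every other point sharing its $f$-value, and $\{z:(f(z),g(z))=(f(r_n),g(r_n))\}=\{r_n\}$ for all $n$.

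The delicate point --- and the step I expect to be the main obstacle --- is that this entire scheme rests on the peaks being (locally) disjointly supported: on any neighbourhood of $r_n$, each competing peak $u_m$ that fails to be locally constant contributes an oscillation of the same order as the drop produced by $u_n$, and can thus destroy the strict maximality. This disjointness is genuinely available only when each $r_n$ is isolated in $R$, i.e. $\mathrm{dist}(r_n,R\setminus\{r_n\})>0$; if points of $R$ accumulate at one of their own, infinitely many peaks are active in every neighbourhood of the limit and their accumulated slopes jeopardise simultaneously the global Lipschitz estimate and the strict maximality there. The recursive, rapidly decaying choice of the radii (each $\rho_n$ below half the distance to the other listed points and below $2^{-n}$) is precisely the device that pushes the supports apart and confines, near each $r_n$, the activity to $u_n$ alone; verifying that this confinement indeed succeeds --- and, in the generality demanded by the applications, pinning down the hypotheses on $R$ under which it does --- is the heart of the matter.
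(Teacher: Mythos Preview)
The paper does not prove this lemma; it is quoted verbatim from \cite{JV11} and used as a black box, so there is no ``paper's own proof'' to compare against. I can therefore only assess your argument on its own merits.

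Your construction is clean and the verification of the two conclusions (strict local maxima, separation via the ratio trick with the weights $1+2^{-n}$) is correct \emph{once the balls $B(r_n,\rho_n)$ are pairwise disjoint}. The gap is exactly the one you flag yourself, and it is fatal rather than merely delicate: you require $\rho_n<\tfrac12\,\mathrm{dist}(r_n,R\setminus\{r_n\})$, which forces every $r_n$ to be isolated in $R$. But look at how the lemma is used in Theorem~\ref{thm:main1}: $R=\{x_n\}$ is a countable \emph{dense} subset of an infinite compact metric space, so every point of $R$ is an accumulation point of $R$ and $\mathrm{dist}(r_n,R\setminus\{r_n\})=0$ for all $n$. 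Your scheme therefore never gets off the ground in precisely the situation the paper cares about.

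The fix is not a small adjustment of radii. A workable construction must allow the supports to overlap and instead control the \emph{heights} recursively: at stage $n$ one chooses $\rho_n$ small relative only to $d(r_n,r_1),\dots,d(r_n,r_{n-1})$ (always possible, finitely many positive numbers) and small enough that the new bump, together with the tail $\sum_{m>n}\rho_m u_m$, cannot overturn the strict maxima already secured at $r_1,\dots,r_{n-1}$; a geometric decay like $\rho_n\le 2^{-n}$ makes the tail harmless. The point is that the strict local maximum at $r_k$ must be protected not by keeping later bumps outside a fixed neighbourhood of $r_k$ (impossible when $r_k$ is a limit of later $r_m$'s) but by making each later bump's height negligible compared with the slope of the $k$-th peak on its own support. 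This is the argument carried out in \cite{JV11}; your disjoint-support shortcut bypasses it and in doing so loses the general case.
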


We are now in position to prove our final results. Theorem \ref{thm:vL3} and Lemma \ref{lem:vL4} provide the tools to apply an standard method of proof introduced by Gy\H{o}ry in \cite{Gyo01}.

\begin{theorem}\label{thm:main1}
Let $X$ and $Y$ be pathwise connected compact metric spaces, and let $E$ be a smooth reflexive Banach space which is $2$-iso-reflexive.
Suppose that $\Delta$ is a $2$-local isometry from $\Lip(X,E)$ to $\Lip(Y,E)$.
Then there exist a Lipschitz homeomorphism $\psi$ from $Y$ onto $X$ and a continuous map $V$ from $Y$ into $(\Iso(E),\mathrm{SOT})$ such that
$$
\Delta f(y)=V(y)(f(\psi(y))),\quad\forall y\in Y,\ f\in\Lip(X,E).
$$
Moreover, $\Delta$ is a surjective linear isometry.
\end{theorem}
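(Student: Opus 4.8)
The plan is to promote the representation of Theorem~\ref{thm:vL3} from the subset $Y_0$ to all of $Y$ and to upgrade the Lipschitz bijection $\psi\colon Y_0\to X$ to a Lipschitz homeomorphism; once this is achieved, linearity and surjectivity of $\Delta$ follow almost formally. So the whole effort concentrates on proving that $Y_0=Y$ and that $\psi^{-1}$ is Lipschitz, and I would split this into a ``closedness'' part and a ``density'' part for $Y_0$.

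First I would show that $\psi^{-1}\colon X\to Y$ is Lipschitz, which is the symmetric counterpart of the estimate closing the proof of Theorem~\ref{thm:vL3}. Given distinct $x_1,x_2\in X$, put $y_i=\psi^{-1}(x_i)\in Y_0$, pick $u_i\in S_E$, and apply $2$-locality to the pair $h_{x_1}\otimes u_1,\,h_{x_2}\otimes u_2$ to obtain a genuine surjective isometry whose symbol $\varphi$ is, by Theorem~\ref{thm BoFleJam}, a Lipschitz homeomorphism with $L(\varphi^{-1})\le\max\{1,\diam(Y)\}$. Comparing the norm of $\Delta(h_{x_i}\otimes u_i)(y_i)$ computed via the $Y_0$-representation (which equals $h_{x_i}(x_i)=1$) with the one computed via this isometry (which equals $h_{x_i}(\varphi(y_i))$), and invoking $h_{x_i}^{-1}(\{1\})=\{x_i\}$, forces $\varphi(y_i)=x_i$; hence $d(y_1,y_2)=d(\varphi^{-1}(x_1),\varphi^{-1}(x_2))\le\max\{1,\diam(Y)\}\,d(x_1,x_2)$. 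Consequently $Y_0=\psi^{-1}(X)$ is the continuous image of a compact space, so $Y_0$ is compact and hence closed in $Y$.

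The heart of the matter, and the step I expect to be the main obstacle, is to prove that $Y_0$ is dense in $Y$; this is exactly where Gy\H{o}ry's scheme enters through Lemma~\ref{lem:vL4}. I would fix a countable dense set $R=\{r_n\}\subset X$, produce separating functions $f,g\in\Lip(X,[0,1])$ as in Lemma~\ref{lem:vL4}, fix $e\in S_E$, and apply $2$-locality to the pair $f\otimes e,\,g\otimes e$ to get a single surjective isometry agreeing with $\Delta$ on both, of the form $h\mapsto W(\cdot)\bigl(h(\varphi(\cdot))\bigr)$ with $\varphi\colon Y\to X$ a homeomorphism. Evaluating at $y_n=\psi^{-1}(r_n)\in Y_0$ and equating the norms of $\Delta(f\otimes e)(y_n)$ and of $\Delta(g\otimes e)(y_n)$ through the two representations (every operator involved being an isometry and $f,g\ge 0$) yields $f(\varphi(y_n))=f(r_n)$ and $g(\varphi(y_n))=g(r_n)$; the separation property of $(f,g)$ then gives $\varphi(y_n)=r_n$, i.e. $\varphi^{-1}(r_n)=y_n\in Y_0$. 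Since $\varphi^{-1}$ is a homeomorphism and $R$ is dense, $\varphi^{-1}(R)\subseteq Y_0$ is dense in $Y$. Being both closed and dense, $Y_0=Y$, and therefore $\psi\colon Y\to X$ is a Lipschitz homeomorphism.

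Finally I would harvest the consequences. With $\Delta(f)(y)=V(y)(f(\psi(y)))$ now valid on all of $Y$ and each $V(y)$ linear, $\Delta$ is linear, and as a $2$-local isometry it is automatically norm preserving, hence a linear isometry. For surjectivity, the inequality $\|V(y)(e)-V(y')(e)\|\le\|e\|\,d(y,y')$ obtained in the proof of Theorem~\ref{thm:vL3} (valid for all $y,y'\in Y$) shows that $y\mapsto V(y)$, and therefore also $y\mapsto V(y)^{-1}$, is Lipschitz for the operator norm; combining this with the Lipschitz bound on $\psi^{-1}$ from the second paragraph, the function $x\mapsto V(\psi^{-1}(x))^{-1}\bigl(g(\psi^{-1}(x))\bigr)$ lies in $\Lip(X,E)$ for every $g\in\Lip(Y,E)$ and is a preimage of $g$ under $\Delta$. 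Thus $\Delta$ is a surjective linear isometry, which completes the proof.
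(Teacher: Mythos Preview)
Your proof is correct and shares its core with the paper's: both promote the representation of Theorem~\ref{thm:vL3} to all of $Y$ by applying Gy\H{o}ry's separating functions (Lemma~\ref{lem:vL4}) to the pair $f\otimes e,\,g\otimes e$, obtaining a genuine Lipschitz homeomorphism $\varphi$ with $\varphi(y_n)=r_n$ on a countable dense set. The endgame differs slightly: you first establish an independent Lipschitz bound on $\psi^{-1}$ (so $Y_0=\psi^{-1}(X)$ is compact, hence closed) and then note that $\varphi^{-1}(R)\subseteq Y_0$ is dense, concluding $Y_0=Y$; the paper instead extends the identity $\psi=\varphi$ from the set $\{y_n\}$ to all of $Y_0$ by continuity and then uses bijectivity of $\varphi$ to force $Y_0=Y$, reading the Lipschitz property of $\psi^{-1}$ directly off the equality $\psi=\varphi$. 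Your version also spells out surjectivity of $\Delta$ via the Lipschitz map $y\mapsto V(y)^{-1}$, which the paper leaves implicit.
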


\begin{proof} Let $Y_0\subset Y$ and $\psi\colon Y_0\to X$ be the set and the Lipschitz bijection given by Theorem \ref{thm:vL3}. Since $\Delta$ is a $2$-local isometry, in particular, there exists a surjective linear isometry from $\Lip(X,E)$ to $\Lip(Y,E)$. It follows from Theorem \ref{thm BoFleJam} that there exists a Lipschitz homeomorphism $\psi_0\colon  Y\to X$. If $Y$ is finite, then $Y$, $X$ and $Y_0$ all have the same cardinality and thus $Y_0=Y$. So, we may assume that $Y$ is infinite.\smallskip

By the separability of $X$, we can find a countable dense subset $(x_n)_n$ of $X$. For each $n\in \mathbb{N},$ we set $y_n=\psi^{-1}(x_n)\in Y_0$. Suppose that $f$ and $g$ are the functions whose existence is guaranteed by Lemma \ref{lem:vL4}.
For a fixed $e\in S_E$, it follows from the $2$-local property of $\Delta$ that there exist a Lipschitz mapping $V_{f\otimes e, g\otimes e}\colon Y\to\Iso(E)$ and a Lipschitz homeomorphism $\varphi_{f\otimes e, g\otimes e}$ from $Y$ onto $X$ such that
$$
\Delta(f\otimes e)(y_n)=f(\varphi_{f\otimes e, g\otimes e}(y_n))\ V_{f\otimes e, g\otimes e}(y_n)(e),
$$
and
$$
\Delta(g\otimes e)(y_n)=g(\varphi_{f\otimes e, g\otimes e}(y_n))\ V_{f\otimes e, g\otimes e}(y_n)(e)
$$
for all $n\in \mathbb{N}$. Combining this identities with the fact that $y_n\in Y_0$ for every $n$, we deduce, via Theorem \ref{thm:vL3}, that
$$
\|\Delta(f\otimes e)(y_n)\|=f(\varphi_{f\otimes e,g\otimes e}(y_n))=\|f(\psi(y_n))V(y_n)(e)\|=\!f(\psi(y_n))\!=\!\!f(x_n),
$$
and
$$
\|\Delta(g\otimes e)(y_n)\|=g(\varphi_{f\otimes e,g\otimes e}(y_n))=\|g(\psi(y_n))V(y_n)(e)\|=\!g(\psi(y_n))\!=\!g(x_n),
$$
where $V\colon Y_0\to\Iso(E)$ is the mapping given by Theorem \ref{thm:vL3}. By the assumption on $f$ and $g$ in Lemma \ref{lem:vL4}, one can derive that $$x_n=\varphi_{f\otimes e, g\otimes e}(y_n)=\psi(y_n)
\hbox{ for all $n\in\mathbb{N}$.}$$

For any $y_0\in Y_0$, there exists a subsequence $(x_{n_k})_k$ of $(x_n)_n$ such that $(x_{n_k})_k\to \varphi_{f\otimes e, g\otimes e}(y_0)\in X$,
which implies that $(\varphi^{-1}_{f\otimes e, g\otimes e}(x_{n_k}))_k\to y_0$.
It follows from the equality $(\varphi^{-1}_{f\otimes e,g\otimes e}(x_{n_k}))_k=(\psi^{-1}(x_{n_k}))_k=(y_{n_k})_k$ that $(\psi^{-1}(x_{n_k}))_k\to y_0$ and hence $(x_{n_k})_k\to\psi(y_0)$. This shows that $\psi=\varphi_{f\otimes e,g\otimes e}$ on the whole $Y_0$.\smallskip

Finally, we prove that $Y_0=Y$. For any $y\in Y$, the element $\varphi_{f\otimes e,g\otimes e}(y)$ lies in $X=\psi(Y_0)$, and hence there exists  $y_0\in Y_0$ such that $\psi(y_0)=\varphi_{f\otimes e,g\otimes e}(y)$. This implies that
$$
\varphi_{f\otimes e,g\otimes e}(y_0)=\psi(y_0)=\varphi_{f\otimes e,g\otimes e}(y),
$$
and then $y=y_0\in Y_0$ because $\varphi_{f\otimes e, g\otimes e}$ is bijective.
\end{proof}

Thanks to Theorem \ref{thm:vL3 AraujoDubarbie}, the previous arguments also work in the case of $2$-local standard isometries.

\begin{theorem}\label{thm:main1 Araujo Dubarbie}
Let $X$ and $Y$ be compact metric spaces, and let $E$ be a smooth reflexive Banach space which is $2$-iso-reflexive. Suppose that $\Delta$ is a $2$-local standard isometry from $\Lip(X,E)$ to $\Lip(Y,E)$.
Then there exist a Lipschitz homeomorphism $\psi$ from $Y$ onto $X$ and a continuous map $V$ from $Y$ into $(\Iso(E),\mathrm{SOT})$ such that
$$
\Delta f(y)=V(y)(f(\psi(y))),\quad\forall y\in Y,\ f\in\Lip(X,E).
$$
Moreover, $\Delta$ is a surjective linear isometry.$\hfill\Box$
\end{theorem}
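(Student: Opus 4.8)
The plan is to replay the proof of Theorem \ref{thm:main1} almost verbatim, substituting Theorem \ref{thm:vL3 AraujoDubarbie} for Theorem \ref{thm:vL3} and invoking the very definition of a standard isometry wherever that argument relied on Theorem \ref{thm BoFleJam}. The whole point is that the standard-isometry machinery never uses pathwise connectedness, so the hypotheses may be relaxed to mere compactness. First I would apply Theorem \ref{thm:vL3 AraujoDubarbie} to obtain the subset $Y_0\subseteq Y$, the Lipschitz bijection $\psi\colon Y_0\to X$, and the SOT-continuous map $V\colon Y_0\to\Iso(E)$ for which $\Delta(f)(y)=V(y)(f(\psi(y)))$ holds on $Y_0$. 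Since $\Delta$ is a $2$-local standard isometry there is, in particular, at least one standard isometry from $\Lip(X,E)$ to $\Lip(Y,E)$, and its symbol lies in $\Iso_{<2}(Y,X)$ and is therefore a Lipschitz homeomorphism because $X$ and $Y$ are bounded (as recorded just after the definition of $\Iso_{<2}$). If $Y$ is finite this already forces $|Y|=|X|=|Y_0|$ and hence $Y_0=Y$, so I may assume $Y$ is infinite.

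Next I would run the density argument of Gy\H{o}ry exactly as before. Using that the compact metric space $X$ is separable, fix a countable dense set $(x_n)_n\subseteq X$ and put $y_n=\psi^{-1}(x_n)\in Y_0$. Let $f,g\colon X\to[0,1]$ be the Lipschitz functions furnished by Lemma \ref{lem:vL4}, and fix $e\in S_E$. The $2$-local standard hypothesis applied to the pair $f\otimes e,\,g\otimes e$ produces a standard isometry agreeing with $\Delta$ at both functions, hence a single symbol $\vp_{f\otimes e,g\otimes e}\in\Iso_{<2}(Y,X)$ and an $\Iso(E)$-valued weight. Comparing the norms $\|\Delta(f\otimes e)(y_n)\|$ and $\|\Delta(g\otimes e)(y_n)\|$ computed from this standard representation, where the isometric weight drops out, with the same norms computed from Theorem \ref{thm:vL3 AraujoDubarbie}, yields $f(\vp_{f\otimes e,g\otimes e}(y_n))=f(\psi(y_n))=f(x_n)$ and the analogous identity for $g$. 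The separating property of the pair $(f,g)$ in Lemma \ref{lem:vL4} then forces $\vp_{f\otimes e,g\otimes e}(y_n)=\psi(y_n)=x_n$ for every $n$.

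I would then upgrade this coincidence on the dense set $\{y_n\}$ to the identity $\psi=\vp_{f\otimes e,g\otimes e}$ on all of $Y_0$: for $y_0\in Y_0$ pick a subsequence with $x_{n_k}\to\vp_{f\otimes e,g\otimes e}(y_0)$, use the continuity of $\vp_{f\otimes e,g\otimes e}^{-1}$ together with $\vp_{f\otimes e,g\otimes e}^{-1}(x_{n_k})=\psi^{-1}(x_{n_k})=y_{n_k}$ to get $y_{n_k}\to y_0$, and conclude $x_{n_k}\to\psi(y_0)$. To see $Y_0=Y$, take an arbitrary $y\in Y$; then $\vp_{f\otimes e,g\otimes e}(y)\in X=\psi(Y_0)$, so $\psi(y_0)=\vp_{f\otimes e,g\otimes e}(y)$ for some $y_0\in Y_0$, whence $\vp_{f\otimes e,g\otimes e}(y_0)=\vp_{f\otimes e,g\otimes e}(y)$ and injectivity gives $y=y_0\in Y_0$. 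Thus $Y_0=Y$, the map $\psi=\vp_{f\otimes e,g\otimes e}$ is a Lipschitz homeomorphism of $Y$ onto $X$, and the representation from Theorem \ref{thm:vL3 AraujoDubarbie} holds on all of $Y$. Linearity of $\Delta$ is immediate from the formula, and $\Delta$ is norm preserving since each local map is a linear isometry; for surjectivity I would write down the candidate inverse $h\mapsto\big(x\mapsto V(\psi^{-1}(x))^{-1}(h(\psi^{-1}(x)))\big)$ and check it lands in $\Lip(X,E)$, which uses that both $V$ and $V^{-1}$ are Lipschitz into $(\Iso(E),\mathrm{SOT})$, the former being the estimate already obtained in the proof of Theorem \ref{thm:vL3} and the latter following from $V(y)^{-1}-V(y')^{-1}=V(y)^{-1}(V(y')-V(y))V(y')^{-1}$.

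The only genuine subtlety, as opposed to routine bookkeeping, is confirming that nothing in the argument secretly used pathwise connectedness. The substitution works precisely because Theorem \ref{thm:vL3 AraujoDubarbie} is stated for arbitrary compact metric spaces and because the symbol of a standard isometry automatically belongs to $\Iso_{<2}(Y,X)$, which is a Lipschitz homeomorphism on bounded spaces, so one never has to pass through Theorem \ref{thm BoFleJam} and its path-connectedness requirement. The surjectivity check is the one place where a short computation is unavoidable, but it is a direct consequence of the Lipschitz-in-SOT control on $V$.
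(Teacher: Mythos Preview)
Your proposal is correct and follows essentially the same approach as the paper, which simply remarks that ``thanks to Theorem \ref{thm:vL3 AraujoDubarbie}, the previous arguments also work in the case of $2$-local standard isometries'' and leaves the details implicit. You have faithfully unpacked those details---replacing Theorem \ref{thm BoFleJam} by the definition of a standard isometry (whose symbol lies in $\Iso_{<2}(Y,X)$ and is therefore a Lipschitz homeomorphism on bounded spaces) and Theorem \ref{thm:vL3} by Theorem \ref{thm:vL3 AraujoDubarbie}---and your explicit verification of surjectivity via the candidate inverse is a welcome addition that the paper omits.
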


In Theorems \ref{thm:main1} and \ref{thm:main1 Araujo Dubarbie}, we have proved that, under certain assumptions on $X$ and $E$, the space $\Lip(X,E)$ is $2$-iso-reflexive and $2$-standard-iso-reflexive, respectively. One of the requirements is that $E$ is $2$-iso-reflexive. In fact, that condition is necessary to get the conclusion of these theorems. Actually, the same arguments given in the proof of \cite[Theorem 3]{AF09} can be applied to get the following result.

\begin{theorem}\label{final}
Let $X$ be a metric space and let $E$ be a Banach space. If $\Lip(X,E)$ is $2$-iso-reflexive or $2$-standard-iso-reflexive, then $E$ is $2$-iso-reflexive.$\hfill\Box$
\end{theorem}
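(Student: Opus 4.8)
The plan is to prove the contrapositive: assuming $E$ is not $2$-iso-reflexive, I will exhibit a single $2$-local \emph{standard} isometry on $\Lip(X,E)$ which is neither linear nor surjective. Since $\mathbf{S}\subseteq\Iso(\Lip(X,E))$, such a map is also a $2$-local isometry, so its existence shows at once that $\Lip(X,E)$ is neither $2$-iso-reflexive nor $2$-standard-iso-reflexive. So fix a $2$-local isometry $\Delta_0\colon E\to E$ which is not both linear and surjective. First I record the soft properties forced by $2$-locality: for each pair $a,b\in E$ there is $W_{a,b}\in\Iso(E)$ with $\Delta_0(a)=W_{a,b}(a)$ and $\Delta_0(b)=W_{a,b}(b)$, whence $\Delta_0$ is distance preserving, satisfies $\Delta_0(0)=0$, and is $1$-homogeneous over $\mathbb{K}$ (exactly as invoked in the proof of Theorem \ref{thm:vL3}). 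Consequently the only possible defect of $\Delta_0$ is the failure of additivity or of surjectivity.

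The lifting device rests on two canonical maps. The first is the isometric embedding $\iota\colon E\to\Lip(X,E)$, $\iota(e)=1_X\otimes e$, which is isometric because $\|1_X\otimes e\|_L=\|e\|$. The second is the post-composition monomorphism $W\mapsto C_W$ from $\Iso(E)$ into $\mathbf{S}$, where $C_W(f)=W\circ f$; each $C_W$ is a standard isometry, obtained by taking $\varphi=\mathrm{id}_X\in\Iso_{<2}(X,X)$ and the globally constant (hence $2$-component-constant) map $J\equiv W$, and is therefore a surjective linear isometry. I then define $\Delta\colon\Lip(X,E)\to\Lip(X,E)$ by the pointwise rule $\Delta(f)(x)=\Delta_0(f(x))$. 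Since $\Delta_0$ is distance preserving and fixes $0$, one gets $L(\Delta(f))=L(f)$ and $\|\Delta(f)\|_\infty=\|f\|_\infty$, so $\Delta(f)\in\Lip(X,E)$ with $\|\Delta(f)\|_L=\|f\|_L$; moreover $\Delta\circ\iota=\iota\circ\Delta_0$, that is, $\Delta$ restricts to a copy of $\Delta_0$ on the constant functions.

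Granting that $\Delta$ is a $2$-local standard isometry, the descent is immediate. If $\Lip(X,E)$ were $2$-standard-iso-reflexive (resp. $2$-iso-reflexive), then $\Delta$ would be linear and surjective. Linearity of $\Delta$ yields additivity of $\Delta_0$ through $\iota(\Delta_0(e+e'))=\Delta(\iota(e)+\iota(e'))=\Delta(\iota(e))+\Delta(\iota(e'))=\iota(\Delta_0(e)+\Delta_0(e'))$ together with the injectivity of $\iota$; combined with the $\mathbb{K}$-homogeneity of $\Delta_0$ this makes $\Delta_0$ linear. Surjectivity of $\Delta$ yields surjectivity of $\Delta_0$: given $e_0\in E$, the constant $1_X\otimes e_0$ equals $\Delta(f)$ for some $f$, so $\Delta_0(f(x))=e_0$ for every $x\in X$ and in particular $e_0\in\Delta_0(E)$. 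In either case $\Delta_0$ would be both linear and surjective, contradicting its choice.

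The main obstacle, and essentially the only substantial step, is to verify that $\Delta$ is a $2$-local standard isometry. For a pair $f,g\in\Lip(X,E)$ one must produce a single standard isometry $T$ with $T(f)=\Delta(f)$ and $T(g)=\Delta(g)$. The operators $C_W$ are the natural candidates, but no single $W$ can satisfy $W\circ f=\Delta_0\circ f$ and $W\circ g=\Delta_0\circ g$ simultaneously, since that would force $W$ to agree with $\Delta_0$ along the entire ranges of $f$ and $g$, whereas $2$-locality of $\Delta_0$ only supplies the pointwise matches $W_{f(x),g(x)}$. Thus the crux is to assemble these pointwise witnesses into one admissible isometry of $\Lip(X,E)$ — in the standard case, a symbol in $\Iso_{<2}(X,X)$ together with a weight $J$ constant on each $2$-component — where the naive selection $x\mapsto W_{f(x),g(x)}$ need not even be continuous. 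Overcoming this is precisely the mechanism carried out in \cite[Theorem 3]{AF09}; transcribing that assembly to the Lipschitz norm $\|\cdot\|_L$ by means of the embeddings $\iota$ and $W\mapsto C_W$ completes the proof.
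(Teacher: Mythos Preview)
The paper gives no proof of this theorem; it simply asserts that the argument of \cite[Theorem~3]{AF09} carries over. Your proposal is therefore aligned with the paper's treatment: you set up the contrapositive, define the pointwise lift $\Delta(f)(x)=\Delta_0(f(x))$, check that linearity and surjectivity of $\Delta$ descend to $\Delta_0$ via the embedding $\iota$, and then, exactly like the paper, defer the one substantive step to \cite{AF09}.

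One caution about your final paragraph. You correctly isolate the crux---for a given pair $f,g$ one must produce a \emph{single} standard isometry matching $\Delta$ at both---and correctly observe that the naive selection $x\mapsto W_{f(x),g(x)}$ need not be constant on $2$-components (nor even continuous). But your resolution, ``this is precisely the mechanism carried out in \cite[Theorem~3]{AF09},'' is an appeal to the reference rather than an argument, and you should verify what that reference actually does before characterizing it as an ``assembly'' of the pointwise witnesses. In the standard-isometry case especially, if $X$ has a nontrivial $2$-component then any admissible $T$ with $\varphi=\mathrm{id}_X$ must use a constant weight $J\equiv W$, which would force $W$ to agree with $\Delta_0$ on all of $f(X)\cup g(X)$---something $2$-locality of $\Delta_0$ does not supply. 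Since the paper itself does not spell out how the transference from $C_0(K,E)$ to $\Lip(X,E)$ handles this, your write-up is no less complete than the paper's, but it would be stronger if you either extracted the relevant step from \cite{AF09} explicitly or confirmed that its construction does not rely on structure (such as unrestricted continuous weights in the Banach--Stone representation) that the standard-isometry framework lacks.
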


\begin{remark}
Our theorems also hold if, in their hypotheses, we consider a $2$-local (standard) isometry $\Delta$ from $\Lip(X,E)$ to $\Lip(Y,F)$, where $E$ and $F$ are smooth reflexive Banach spaces for which every $2$-local isometry from $E$ to $F$ is both linear and surjective. In such a case, the map $V$ obtained in the conclusion should take its values in $\Iso(E,F)$.
\end{remark}

\textbf{Acknowledgements.} A. Jim{\'e}nez-Vargas and A.M. Peralta are partially supported by the Spanish Ministry of Economy and Competitiveness and European Regional Development Fund project no. MTM2014-58984-P and Junta de Andaluc\'{\i}a grants FQM194 and FQM375. L. Li is partly supported by NSF of China (11301285), L. Wang is partly supported by NSF of China Grants No. 11371222 and 11671133, and Y.-S. Wang is partly supported by Taiwan MOST 106-2115-M-005-001-MY2.\smallskip

The results presented in this note were originated during a visit of A.M. Peralta at the School of Mathematical Sciences in Nankai University. He would like to thank the second author and the Department of Mathematics for the hospitality during his stay.

\end{document}